%%%%%%%%%%%%%%%%%%%%%%% no template.tex %%%%%%%%%%%%%%%%%%%%%%%%%
\documentclass[a4paper,11pt, english]{article}
\usepackage[affil-it]{authblk}
\usepackage{nostro}
\usepackage[nocompress]{cite}

\makeatletter
\def\@maketitle{%
  \newpage
  \null
  \vskip 2em%
  \begin{center}%
  \let \footnote \thanks
    {\Large\bfseries \@title \par}%
    \vskip 1.5em%
    {\normalsize
      \lineskip .5em%
      \begin{tabular}[t]{c}%
        \@author
      \end{tabular}\par}%
    \vskip 1em%
    {\normalsize \@date}%
  \end{center}%
  \par
  \vskip 1.5em}
\makeatother

\begin{document}

\title{Parallel computation of interval bases for persistence module decomposition 
%\thanks{SS is a member of the Gruppo Nazionale per le Strutture Algebriche, Geometriche e le loro Applicazioni (GNSAGA) of the Istituto Nazionale di Alta Matematica (INdAM) and partially supported by the MIUR Excellence Department Project awarded to the Department of Mathematics, University of Rome Tor Vergata, MatMod@TOV 2023-2027.}
}

\author{Alessandro De Gregorio  \orcidlink{0000-0001-7577-3655}
\thanks{email: \texttt{alessandro.degregorio@polito.it}}}
\affil{Department of Mathematical Sciences, Politecnico di Torino, Italy}

\author{Marco Guerra \orcidlink{0000-0003-0033-3748}
\thanks{email: \texttt{marco.guerra@univ-grenoble-alpes.fr}}}
\affil{Institut Fourier, Universit\'{e} Grenoble-Alpes CNRS}

\author{Sara Scaramuccia \orcidlink{0000-0003-4010-6507}
\thanks{email: \texttt{sara.scaramuccia@uniroma2.it}}}
\affil{Department of Mathematics, Universit\`a di Roma Tor Vergata, Italy}

\author{Francesco Vaccarino \orcidlink{0000-0002-0610-9168}
\thanks{email: \texttt{francesco.vaccarino@polito.it}}
\affil{Department of Mathematical Sciences, Politecnico di Torino, Italy }}

\providecommand{\keywords}[1]{\\ \textbf{\textit{Key-words:}} #1}
\providecommand{\subclass}[1]{\\ \textbf{\textit{Mathematics Subject Classification (2020):}} #1}

\date{\today}

\maketitle

\begin{abstract}
A persistence module $M$, with coefficients in a field $\F$, is a finite-dimensional linear representation of an equioriented quiver of type $A_n$ or, equivalently, a graded module over the ring of polynomials $\F[x]$. It is well-known that $M$ can be written as the direct sum of indecomposable representations or as the direct sum of cyclic submodules generated by homogeneous elements. An interval basis for $M$ is a set of homogeneous elements of $M$ such that the sum of the cyclic submodules of $M$ generated by them is direct and equal to $M$. We introduce a novel algorithm to compute an interval basis for $M$. Based on a flag of kernels of the structure maps, our algorithm is suitable for parallel or distributed computation and does not rely on a presentation of $M$. This parallel algorithm outperforms the approach via the presentation matrix and Smith normal form. We specialize our parallel approach to persistent homology modules, and we close by applying the proposed algorithm to tracking harmonics via Hodge decomposition. 
\vspace{.6cm}
\keywords{Graded module decomposition \and  Persistent homology \and  Hodge decomposition \and Hodge Laplacian \and Graded Smith Normal Form}
% \PACS{PACS code1 \and PACS code2 \and more}
%
\subclass{13P20 \and % Computational homological algebra
55N31 \and % Persistent homology and applications, topological data analysis 
62R40   % Topological data analysis}
}
\end{abstract}

\section{Introduction}
\label{sec:intro}
{\em Persistence module} is a modern name for finite-dimensional representations of an equioriented quiver of type $A_n$ that has become popular within the setting of Topological Data Analysis (TDA) and, more specifically, in connection to \textit{persistent homology}, one of the most successful tools in TDA (see \cite{carlsson2009data}).

First, a quiver $Q$ of type $A_n$ is the Hasse diagram of the linearly ordered set $[n]:=(\{1,\dots,n\},\leq)$. This is an oriented simple graph whose vertices are indexed by $[n]$ and whose set of arrows is $\{(i,i+1)\,:\, i=1,\dots, n-1\}$.
\begin{definition}
A {\em persistence module} $\mathcal{M} = \{(M_i, \varphi_i)\}_{i=1}^n$ is a linear representation  of $Q$ with coefficients in a field $\F$. More explicitly, a persistence module is given by the following datum:
\begin{itemize}
	\item a finite-dimensional $\F$-vector space $M_i$, called the $\tth{i}-${\em step}, for each vertex $i$ in $[n]$;
	\item a linear map $\varphi_{i}: M_i \longrightarrow M_{i+1}$, called $\tth{i}-${\em structure map}, for each arrow  $(i, i+ 1) $ in $[n]$.
\end{itemize}
    \label{def:quiver-representation}
\end{definition}
It is a well-known result that persistence modules can be decomposed, uniquely up to isomorphism, into the direct sum of indecomposable modules (see \cite{gabriel1972}) 
\begin{equation} \label{eq:intro_decomposition}
    \mathcal{M} \cong \bigoplus_{m=1}^N I_{[b_m, d_m]}
\end{equation}
where, for all $1\leq b_m\leq d_m\leq n$, $I_{[b_m, d_m]}$ is the persistence module with steps $(I_{[b_m, d_m]})_i=\F$ for all integers $i$ in the closed interval $[b_m, d_m]$ and zero elsewhere; and structure maps the identity for $i\in [b_m, d_m-1]$ and zero elsewhere. 
The modules $I_{[b_m, d_m]}$ are often called \textit{interval modules} and are the indecomposable representations of $A_n$. The decomposition of~\eqref{eq:intro_decomposition} into interval modules was well-known in the quiver representations community since the 70s, and somehow neglected and rediscovered in persistence several years later \cite{oudot2017persistence}.
The multiset made of the intervals $[b_m,d_m]$ is a complete discrete invariant for the isomorphism classes of finite-dimensional linear representations of type $A_n$, see the works of Abeasis, Del Fra, and Kraft \cite{Abe80,AbeJAlg85,abeasis1981geometry}. In particular, they presented in the early 80s the first example of \textit{barcode} that we know, calling it \textit{diagram of boxes} (see, e.g., Section 2 in \cite{abeasis1981geometry}).

A persistence module $\mathcal{M}$ can be associated with a graded $\F[x]$-module $\alpha(\mathcal{M})$ under a well-known equivalence of categories~\cite{corbet2018,Carlsson2005DCG}, in the following way: given $\mathcal{M}$ as above, $\alpha(\mathcal{M})$ is defined as $\bigoplus_{i\in\N}\alpha(M_i):=M_1\oplus M_2\cdots \oplus M_n\oplus M_n\oplus M_n\cdots$. The grading structure is obtained by setting $xv=\varphi_{i}(v)$, for each $i\in [n]$ and $v\in \alpha(M_i)=M_i$ and $xv=v$ for $v\in\alpha(M_j)=M_n$ for $j>n$. 
In this setting, too, there is a well-known decomposition, where cyclic submodules generated by homogeneous elements play the same role as the indecomposable quivers. Consider indeed the cyclic submodule $I(v)$ of $\alpha(\mathcal{M})$, generated by a homogeneous element $v\in M_b$, for some $b$. Then there are two possibilities for $v$: it has torsion, that is, there is $e\in\N$ such that $x^ev=0_{\mathcal{M}}$, so that $I(v)\cong\F[x]/(x^e)$ or $v$ is torsion-free, so that $I(v)\cong \F[x]$. We denote by $I(b,e)$ the submodule $I(v)$ in the torsion case and by $I(b,\infty)$ in the torsion-free (also named by ``free'') case. We call \textit{interval modules} the modules of the type $I(*,*)$ as their germane in the quiver representations setting.

The theorem of decomposition of a graded module over a graded principal ideal (see Theorem 1 in \cite{webb1985}) domain can now be restated as
\begin{equation}
\label{eq:intro_gradecomp}
\alpha( \mathcal{M}) \cong \bigoplus_{m=1}^N I(b_m, e_m).
\end{equation}
Now, $e_m$ can be an integer or the $\infty$ symbol. Exactly as for the quiver representation case, the multiset of intervals $(b_m,e_m)$ occurring in the decomposition is a complete discrete invariant for the isomorphism classes of persistence modules, usually called the \textit{barcode} in TDA.

Strictly related to the above decompositions into interval submodules is the concept of \textit{interval basis}:
a finite set $\{v_1,\dots, v_N\,:\, v_m\in M_{b_m}, \forall m\}$ of homogenous elements of $\mathcal{M}$ such that
	$\bigoplus_{m=1}^N I(v_m) = \mathcal{M}$.

By applying the construction in the proof of Lemma 6 in \cite{corbet2018}, here reported in Definition \ref{def:presentation-matrix} in \cref{sec:from-pers-module-to-matrix}, one can always turn a persistence module into a graded module presentation. Once a persistence module is assigned a presentation matrix, the graded Smith normal form reduction proposed in~\cite{skraba+2013x} (reported in \cref{alg:graded-SNF} in \cref{sec:algo-graded-normal}) returns an interval basis. Details will be treated in~\Cref{sec:graded-modules}.

As a guiding example, consider the persistence module $\mathcal{M}=\{ (M_i,\varphi_i) \}_{i=1}^{3}$ with coefficients in a field $\F$ and structure maps 

\begin{equation} \label{ex:RunningExPersMod}
\mathcal{M}:  0 \ \underset{\scalebox{0.6}{$\begin{pmatrix} 0 \end{pmatrix}$}}{\overset{\varphi_0}{\longrightarrow}} \ \F \ \underset{\scalebox{0.6}{$\begin{pmatrix} 1 \\ 0 \end{pmatrix}$}}{\overset{\varphi_1}{\longrightarrow}} \ \F^2 \ \underset{\scalebox{0.6}{$\begin{pmatrix} 1 & 1 \end{pmatrix}$}}{\overset{\varphi_2}{\longrightarrow}} \ \F \ \underset{\scalebox{0.6}{$\begin{pmatrix}  0 \end{pmatrix}$}}{\overset{\varphi_3}{\longrightarrow}} \ 0, 
\end{equation}

so $M_1 \cong M_3 \cong \F$ and $M_2 \cong \F^2$. The decomposition $ \mathcal{M} \cong \bigoplus_{m=1}^N I_{[b_m, d_m]}$ is then (up to isomorphism) given by: 

\begin{equation}
(0\underset{0}{\to} \F \underset{1}{\to}\F \underset{1}{\to}\F\underset{0}{\to}0)\oplus(0\underset{0}{\to} 0 \underset{0}{\to} \F \underset{0}{\to} 0\underset{0}{\to}0).
\label{eq:ex-quiver-decomposition}	
\end{equation}

Consider now, $v_1=(1)\in M_1$ and $v_2=(0,1)^\top\in M_2$, one has:

\[ \ v_1= \begin{pmatrix} 1 \end{pmatrix} \ {\overset{\varphi_1}{\longrightarrow}} \ \begin{pmatrix} 1 \\ 0 \end{pmatrix} \ {\overset{\varphi_2}{\longrightarrow}} \ \begin{pmatrix} 1 \end{pmatrix} \ {\overset{\varphi_3}{\longrightarrow}} \ \begin{pmatrix} 0 \end{pmatrix}; \]

\[  \ v_2= \ \begin{pmatrix} 0 \\ 1 \end{pmatrix} \ {\overset{\varphi_2}{\longrightarrow}} \ \begin{pmatrix} 1 \end{pmatrix} \ {\overset{\varphi_3}{\longrightarrow}} \ \begin{pmatrix} 0 \end{pmatrix}. \]

A minimal presentation (see Definition \ref{def:presentation} in \cref{subsec:graded-modules}) of the associated graded $\F[x]$-module $\alpha(\mathcal{M})$ is thus obtained as the cokernel of the presentation matrix:

\begin{equation}
S = 
\begin{pmatrix}      x^2 & x^3  \\ 
                    -x & 0 \end{pmatrix},    
    \label{eq:presentation-classical}
\end{equation}
\noindent
whose columns correspond to the (homogeneous) independent relations satisfied by the homogeneous generators $v_1, v_2$ of $\alpha(\mathcal{M})$: that is $x^2v_1=xv_2$ ($\deg=3$) and $x^3v_1=0$ ($\deg=4)$. The elements $v_1$ and $v_2$ form a minimal system of generators (see Definition \ref{def:presentation} in \Cref{subsec:graded-modules}) for $\alpha(\mathcal{M})$, nevertheless they do not form an interval basis for $\alpha(M)$ because $I(v_2)=0\underset{0}{\to} 0 \underset{0}{\to} \F \underset{1}{\to} \F\underset{0}{\to}0$ does not appear in  the decomposition \eqref{eq:ex-quiver-decomposition}.

On the contrary, we obtain an interval basis using $v'_1=v_1=(1)\in M_1$ and $v'_2=(-1,1)^\top\in M_2$.
Considering that $x^kv'_1=0$ iff $k\geq3$ and $xv'_2=0$, the corresponding presentation matrix is the following
 
\begin{equation}
\begin{pmatrix}      0 & x^3  \\ 
                        x & 0 \end{pmatrix}    
    \label{eq:presentation-interval}
\end{equation}
and $v'_1$ and $v'_2$ form an interval basis.
This basic example shows that not all the minimal systems of homogeneous generators of a graded module over $\F[x]$ are interval bases, while, a fortiori, the opposite is true.
Indeed, the presentation associated with an interval basis presents a particular kind of relation; each relation involves a single generator up to multiplication by a homogeneous element in $\F[x]$ as exemplified above. 
In other words, an interval basis is a minimal presentation whose presentation matrix is in Smith normal form (see \Cref{thm:snf-interval-correspondence} in \Cref{subsec:graded-modules}).

The main result of this paper is to present \cref{alg:total-decomposition} to find an interval basis of $\mathcal{M}$ without computing a presentation of $\alpha(\mathcal{M})$. 
Our algorithm is distributed over persistence module steps (\cref{alg:ssd-general}) and avoids explicitly constructing a presentation matrix.
A specialization to the case of real coefficient is included in \cref{alg:ssd-real} in \cref{subsec:real}.

\FloatBarrier

\begin{center}
\vspace{.5cm}
\begin{tikzcd}
{\overset{\text{persistence module}}{\{(M_i \ , \ \varphi_i) \}_{i=1}^{n}}} \arrow[dd, "\text{\cref{alg:total-decomposition}}"'] \arrow[rd, "\text{\cref{def:presentation-matrix}}" ] & \\
& \overset{\text{presentation matrix}}{\Phi} \arrow[ld, "\text{\cref{alg:graded-SNF}}" ] \\
\overset{\text{interval basis}}{\{ v_m \}_{m=1}^N} &                                                   
\end{tikzcd}
\vspace{.5cm}
\end{center}

\FloatBarrier

\subsubsection*{The case of persistent homology modules}

A finite sequence of chain complexes $(C_\bullet^i, \partial_{\bullet}^i)$ for $i\in [n]$, connected by chain maps $f^i: C^i_\bullet \rightarrow C^{i+1}_\bullet$, determines a persistence module $C_k=\{ (C_k^i, f_k^i) \}_{i=1}^n$ for each $k$.
Another persistence module can be obtained by applying to $C_\bullet$ the homology functor in some degree.
Here, we call $\tth k$-{\em persistent homology module} the persistence module $H_k=\{ (H_k^i, \widetilde{f}_k^i) \}_{i=1}^n$ obtained by applying to $C_\bullet$  the homology functor in degree $k$.

Hence in a persistent homology module we do not assume the maps $f^i$ to be necessarily injective, which is a typical assumption within the TDA context.
% where injective maps are induced by filtered simplicial and cubical domains.
% For convenience, we call the case of persistence modules obtained from filtered simplicial complexes .
%
%
\begin{figure}
    \centering
    \subfloat[Step 0]{
    \includegraphics[width = 0.25\textwidth]{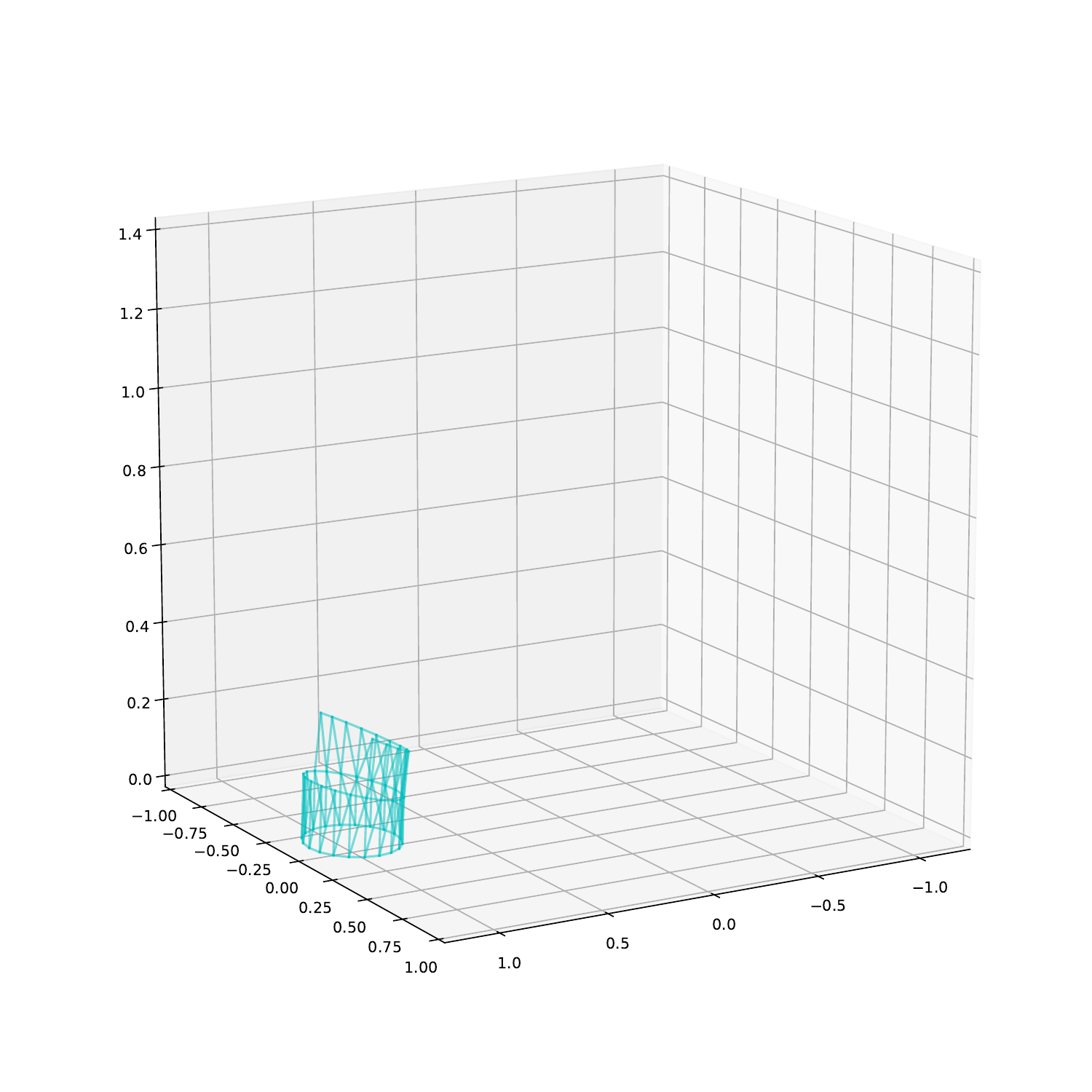}
    }
    \subfloat[Step 1]{
    \includegraphics[width = 0.25\textwidth]{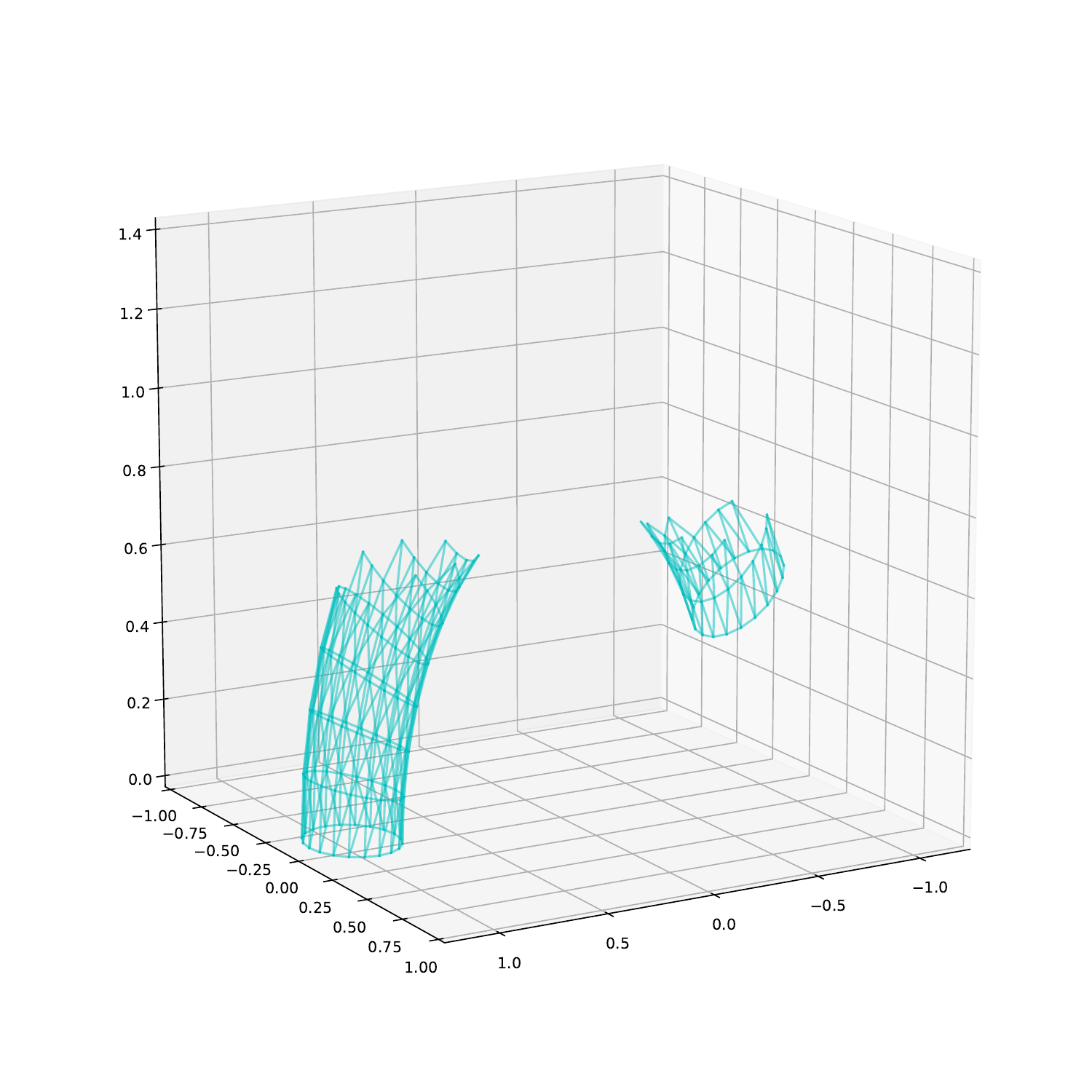}
    }
    \subfloat[Step 2]{
    \includegraphics[width = 0.25\textwidth]{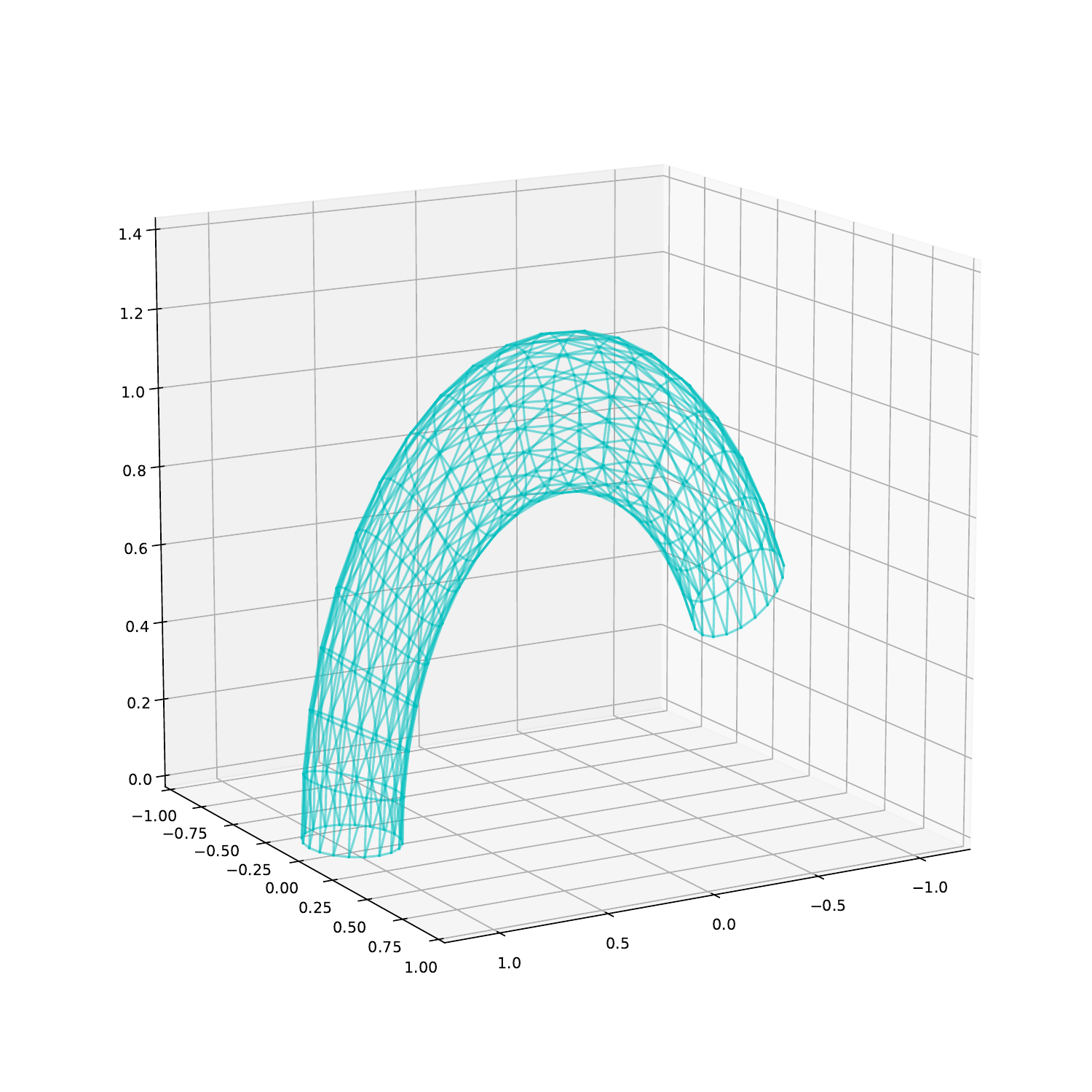}
    }
    \caption{A 3-step filtration by sublevel sets, for the $z$ coordinate, of a tilted and triangulated half torus.}
    \label{fig:example_filtration}
\end{figure}
Indeed, TDA often focuses on the special but relevant case of computing persistence modules from filtered data, such as {\em filtered simplicial or cubical complexes}.
For convenience, when we consider persistent homology in the special case of filtered simplicial complexes (see \Cref{subsec:filtered}), we call it {\em persistent simplicial homology} to avoid confusion.
In that case, the chain maps $f^i$ are assumed to be injective and the combinatorial simplicial structure plays a key role in the computational optimizations and the tracking of homology representatives. The interested reader is referred to \cite{carlsson2009data,ghrist2008barcode} for classical surveys on TDA, and to \cite{Patania2017,vaccarino2022higher,chazal2022introduction} for more recent ones.

Our parallel decomposition algorithm applies to persistent simplicial homology to track homology representatives along a filtered simplicial complex. 

The problem of tracking homology representatives along filtered complexes has been studied mainly from a minimality perspective (see \cite{dey2018efficient,kalivsnik2019higher,Guerra2021HomologicalBases}) to locate the persistent homology features geometrically.
Using the standard algorithm \cite{Carlsson2005DCG} for computing persistent homology, homology representatives can be tracked by storing the operations performed during matrix reduction.
Computing intervals and tracking homology representatives have been optimized in many ways (see \cite{desilva2011dualities,Chen2011,Boissonat2013,fugacci2014efficient,mrozek2009coreduction,Dotko2014}), including parallel and distributed approaches, such as \cite{Bauer2014,Bauer2014distributed,Lipsky2011,Milosavljevic2011}. This list is far from being exhaustive. 
However, not all the mentioned approaches provide an interval basis, and for this purpose, we include the discussion on two relevant cases in Remark \ref{rem:to-reviewer-1} and Remark \ref{rem:to-reviewer-2}.

With respect to the filtered simplicial complex case, our aim is not to outperform computations in persistent simplicial homology but to capture and formalize the algebraic properties of the tracked homology representatives.
In~\cref{fig:example_filtration}, we see an example of a simplicial complex obtained as a triangulation of a portion of a torus filtered by the height function into three steps. 

In the same example of filtered simplicial complex, one can check that the persistence module isomorphism class in the examples of \labelcref{eq:presentation-classical} and \labelcref{eq:presentation-interval} is the class of the persistent homology module obtained by applying the $\st{1}$-homology functor to the filtered simplicial complex in \cref{fig:example_filtration}. Further, generators $v_1$ and $v_2$, which do not form an interval basis, are those associated with the homology classes of the representatives shown in red in \cref{fig:example_classical}, whereas $v'_1$ and $v'_2$ are associated with the homology classes of the representatives in \cref{fig:example_interval}, and do form an interval basis.

\begin{figure}
    \centering
    \subfloat[Representative 1]{
    \includegraphics[width = 0.25\textwidth]{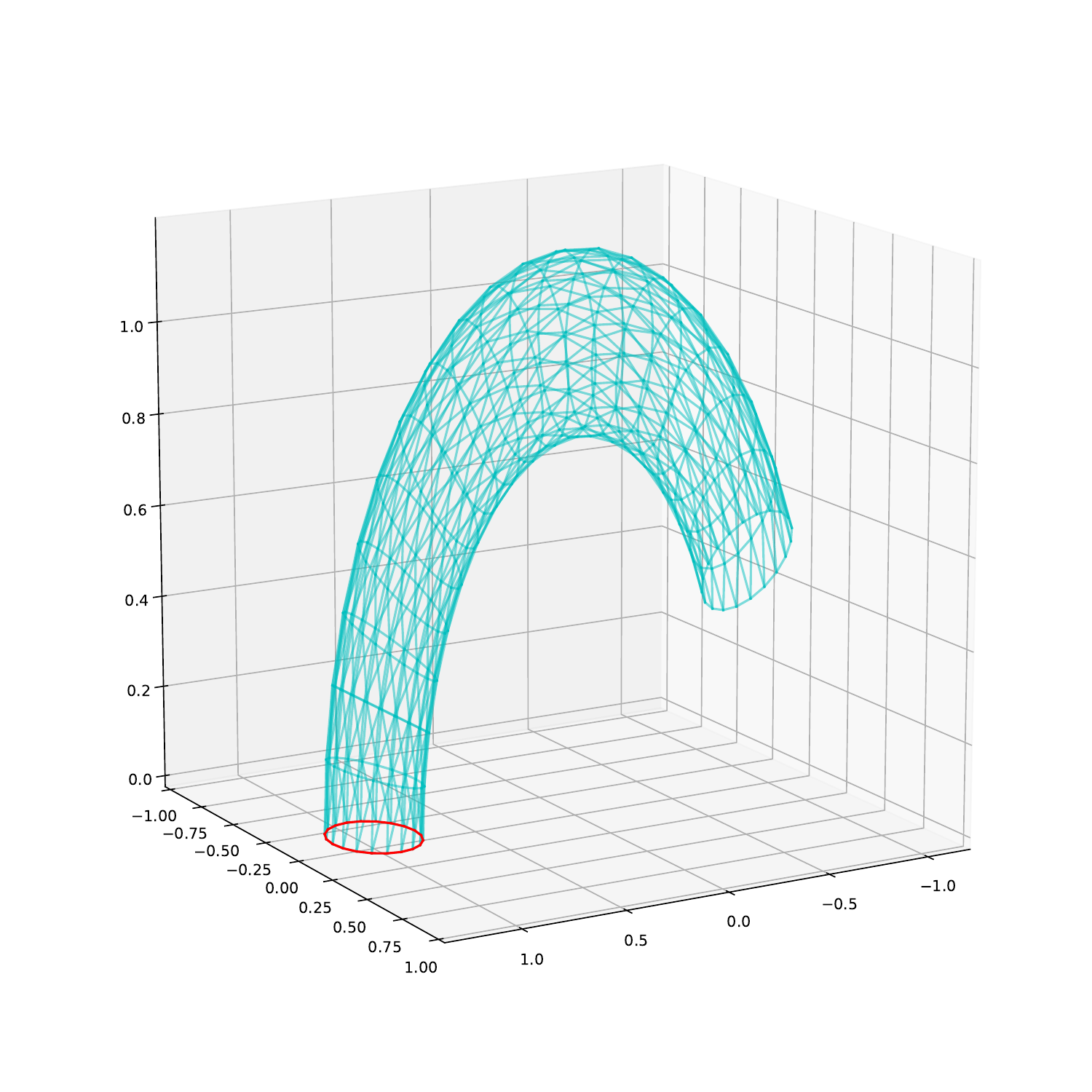}
    }
    \subfloat[Representative 2]{
    \includegraphics[width = 0.25\textwidth]{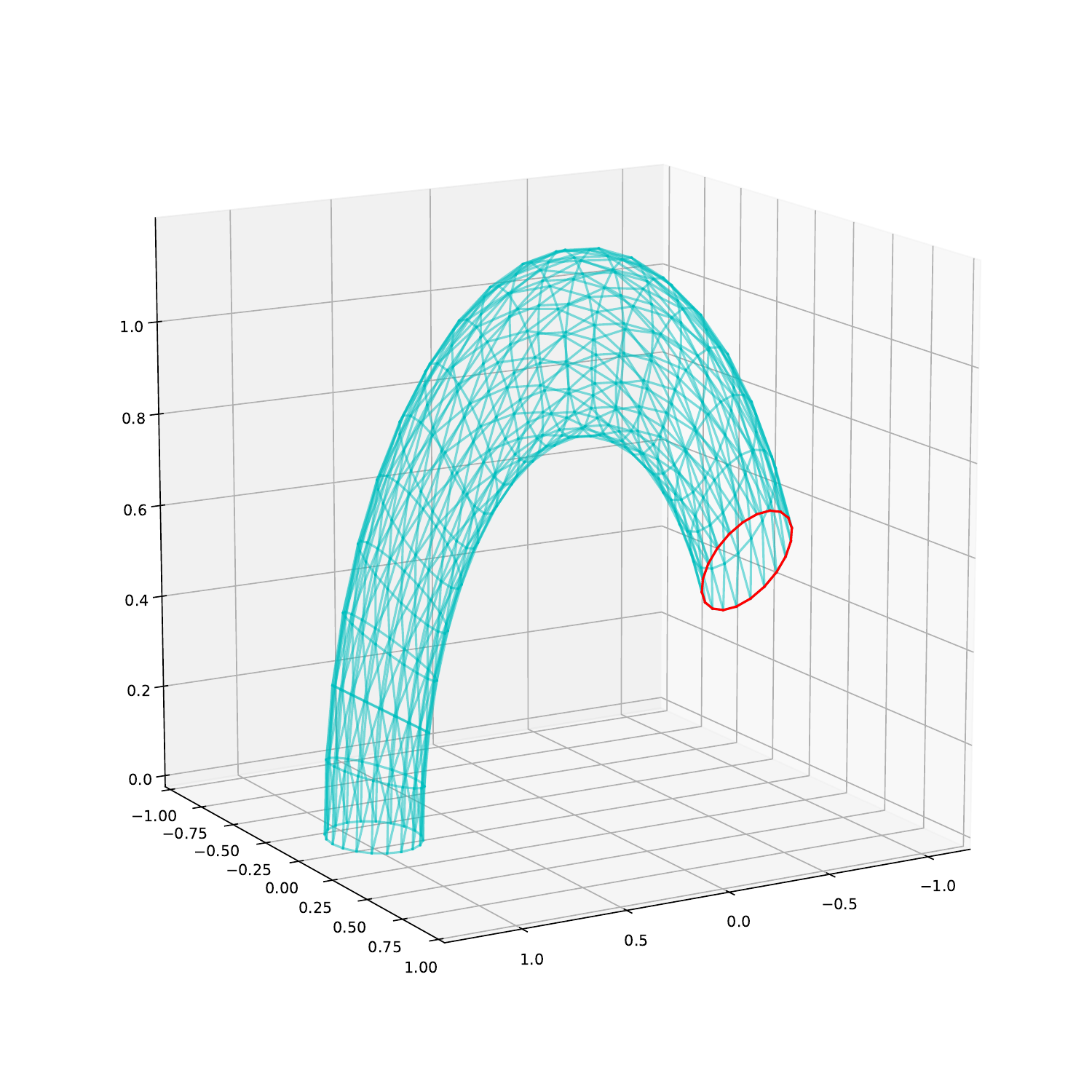}
    }
    
    \caption{In red, two representative $1$-cycles. Their homology classes form a minimal system of generators of the persistent homology module of the filtration in \cref{fig:example_filtration}. These representatives do not induce an interval basis.}
    \label{fig:example_classical}
\end{figure}

\begin{figure}
    \centering
    \subfloat[Representative 1]{
    \includegraphics[width = 0.25\textwidth]{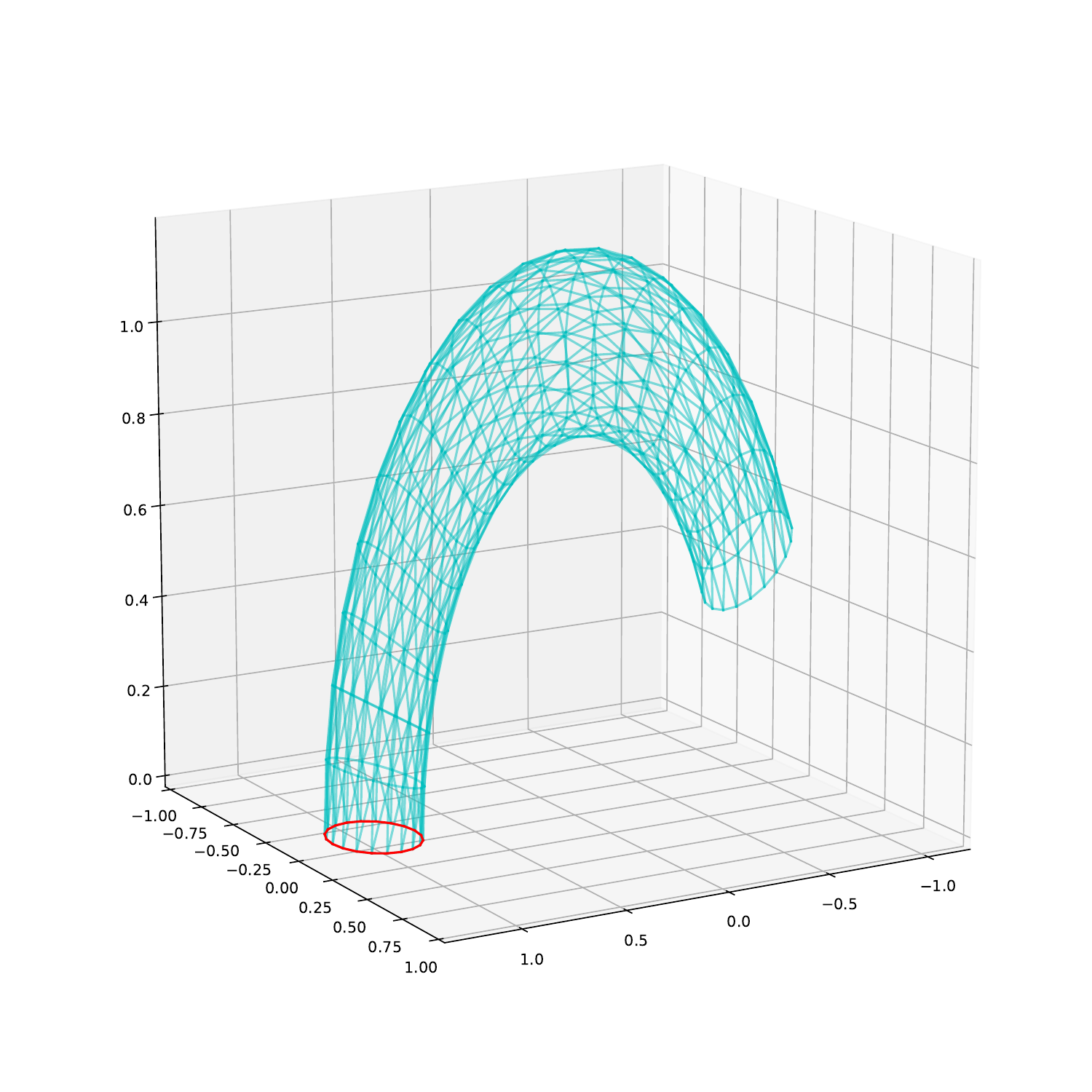}
    }
    \subfloat[Representative 2]{
    \includegraphics[width = 0.25\textwidth]{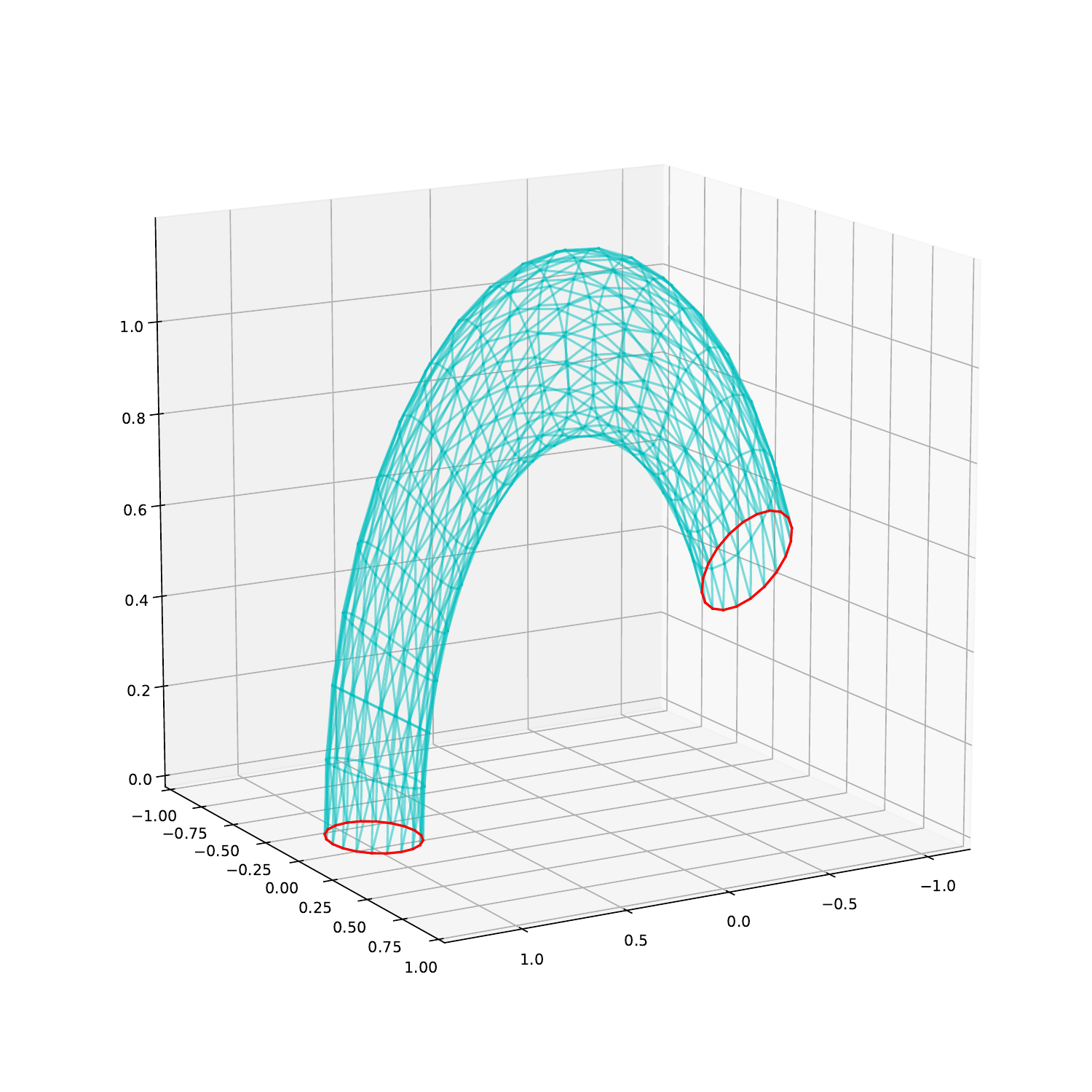}
    }
    
    \caption{In red, a different choice of representative $1$-cycles. Their homology classes are a different choice of generators for the same persistence module as in \cref{fig:example_classical}. However, these generators do induce an interval basis.}
    \label{fig:example_interval}
\end{figure}

As already mentioned, we are not limited to persistent simplicial homology. Indeed, a persistent homology module does not necessarily come from injective simplicial maps. 
In the context of TDA, tracking homology representatives along a monotone (equioriented) sequence of simplicial maps that are not necessarily injective is treated in~\cite{Dey2014ComputingMaps}. 
Each simplicial map is interpreted as a sequence of inclusions and vertex collapses, and a consistent homology basis can be maintained efficiently through a specific data structure called annotations. Later in \cite{Kerber2019BarcodesHomology}, a variation of the coning approach of \cite{Dey2014ComputingMaps} is proposed, which takes a so-called {\em simplicial tower} and converts it into a filtration while preserving its barcode, with asymptotically small overhead. 
Here, we can tackle the same problem from the unifying perspective of persistence modules, thus avoiding specific data structures or reducing them to filtrations.

Finally, our proposed parallel decomposition algorithm applies to tracking {\em harmonic} homology representatives. 
This furthers the recent trend of exploring the interplay between topological data analysis and the properties of the Hodge Laplacian (see \cite{Ebli+2019icmla,Wang+2020PSpecGraph,Wang2021hermes}).
Harmonic homology representatives corresponding to interval bases and computed by our methods for the filtered complex in \cref{fig:example_filtration} are depicted in \cref{fig:example_harmonics}.

\begin{figure}
    \centering
    \subfloat[Representative 1]{
    \includegraphics[width = 0.25\textwidth]{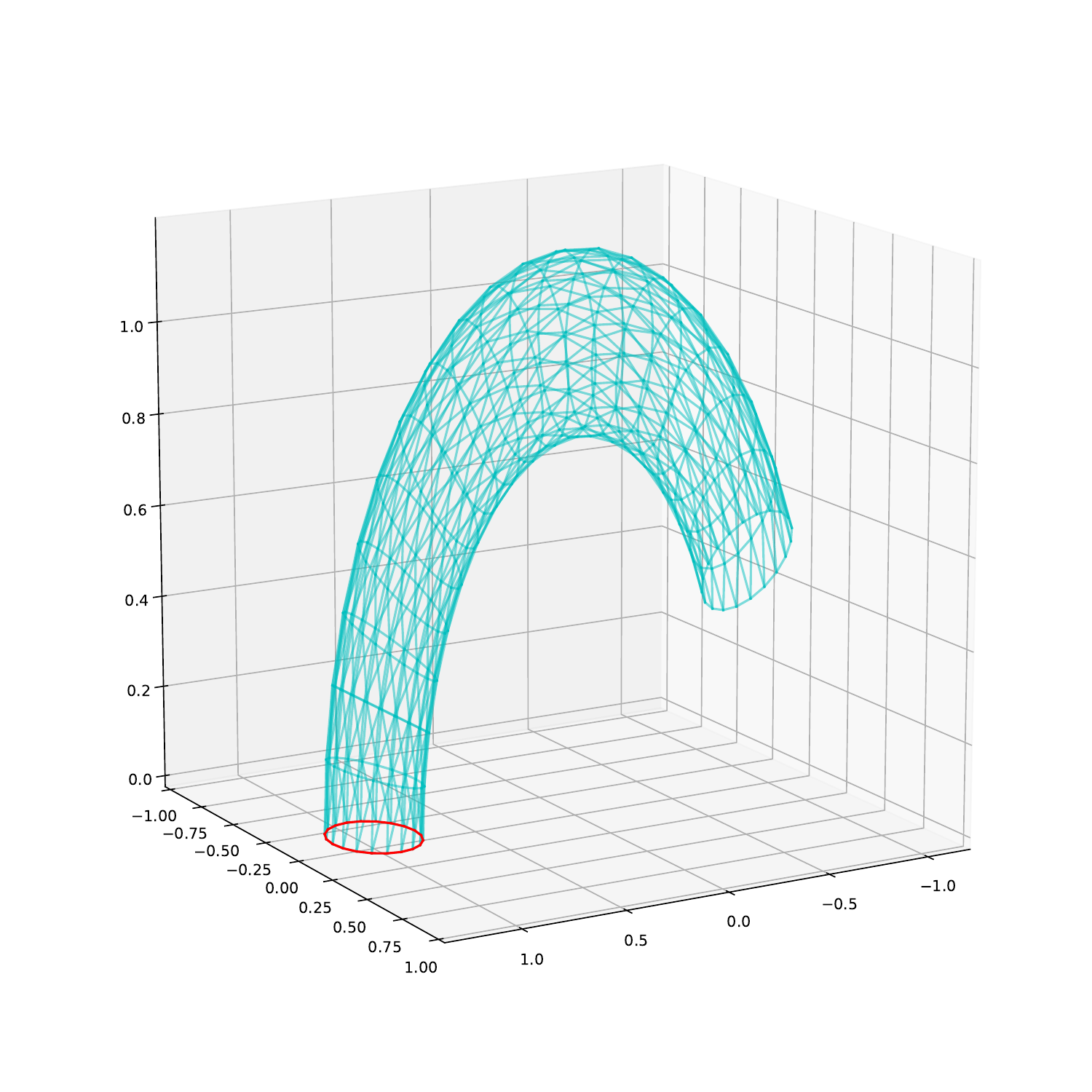}
    }
    \subfloat[Representative 2]{
    \includegraphics[width = 0.25\textwidth]{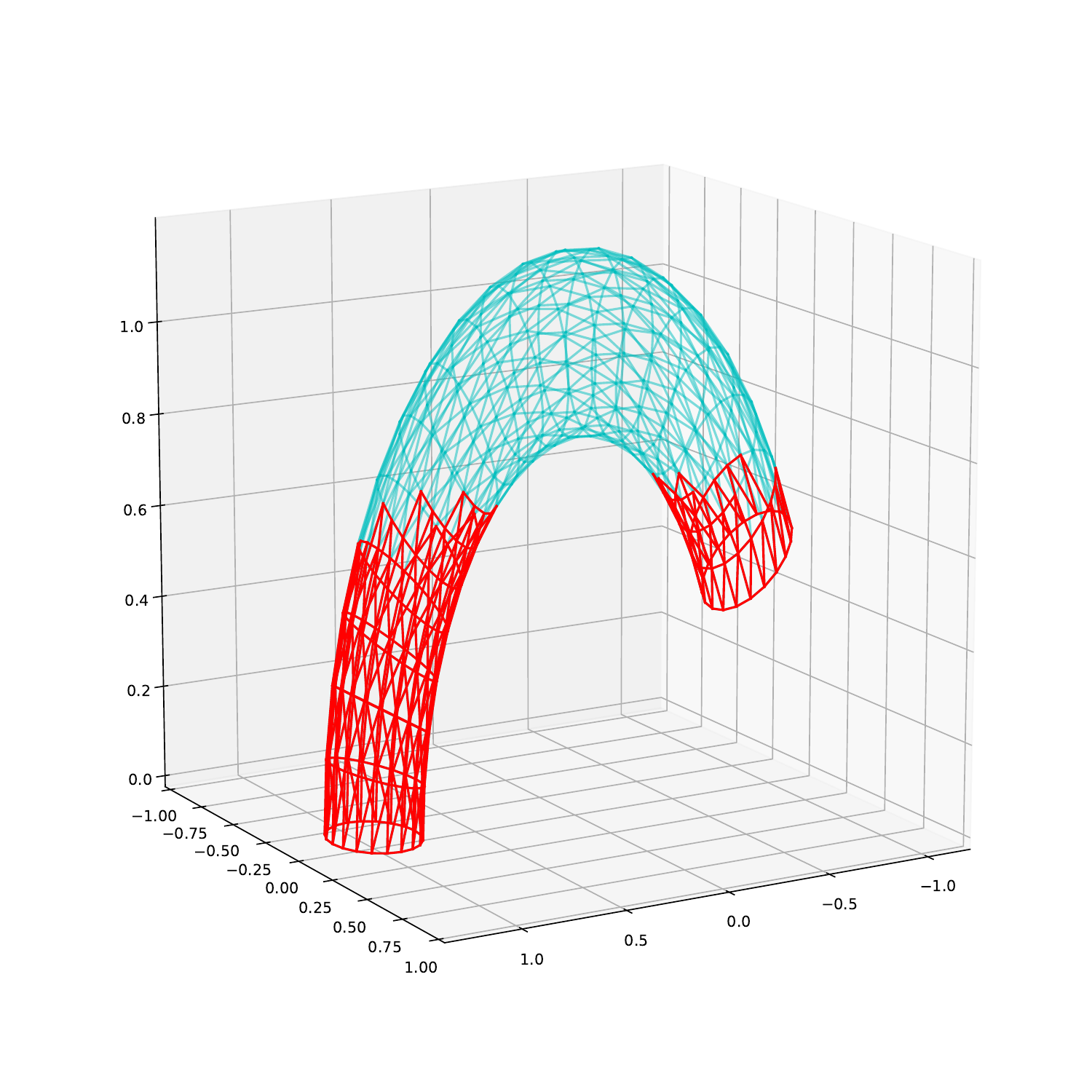}
    }
    
    \caption{Harmonic representatives via the interval basis algorithms}
    \label{fig:example_harmonics}
\end{figure}

\paragraph{Contents.}
In \Cref{sec:Background} and \Cref{sec:graded-modules}, we introduce our notation by formalizing an interval basis as a particular minimal system of generators translated into persistence module terms. We also express the classical interval decomposition result into interval basis terms.
In \cref{sec:related}, we review the literature in the decompositions of graded and persistence modules as quiver representations of type $A_n$.
In \Cref{sec:Decomposition}, we propose an algorithm computing an interval basis out of a persistence module by acting in a distributed way over each step in the input persistence module, and avoiding a presentation of the associated graded module. The same algorithm is specialized to the case of real coefficients in \cref{subsec:real}. This is particularly relevant for the case of harmonics, later discussed in \cref{subsec:harmonics}.
In \Cref{sec:complexity}, we compare the computational cost of our parallel method to the classical Smith Normal Form reduction (whose pseudocode for the graded case is reported in \cref{sec:algo-graded-normal}), when specialized to presentation matrices of persistence modules. In particular, our parallel approach admits an output-dependent estimate, quantifying the advantage of working in parallel.
In \Cref{sec:PersistentHomology}, we describe how to construct in parallel a persistence module from the homology of a monotone sequence of chain maps with homology representatives. Complementary pseudocodes are included in \cref{subsec:homology-algorithms}.
In particular, in \Cref{subsec:harmonics}, we construct in parallel a persistence module from the homology of a monotone sequence of chain maps with harmonic homology representatives. The case of simplicial complex chains is treated in \cref{subsec:simplicial}.

% --------
% background with stack bases

\section{Persistence modules} \label{sec:Background}
In this section, we fix the notation for persistence modules and define interval bases.
In Proposition \ref{prop: interval-existence}, we include the well-known decomposition theorem for equi-oriented quiver representations of type $A_n$ conveniently concerning the interval basis definition. 

For the sake of completeness, in~\cref{sec:graded-modules}, we provide further material connecting the interval basis definition to the Smith Normal Form of a module presentation in the isomorphic category of finitely generated graded $\F[x]$-modules, where $\F[x]$ is the graded ring of polynomials with coefficients in $\F$ and a single indeterminate $x$.
To describe a persistence module, we follow the notation $\{(M_i,\varphi_{i})\}_{i=1}^n$ from Definition \ref{def:quiver-representation}, 
 and we define $\varphi_{i,j} : M_i \rightarrow M_j$ with $i<j$, as the composition $\varphi_{j-1} \circ \dots \circ \varphi_i$. 
Furthermore, under the already described equivalence of categories $\alpha$ we transpose to persistence modules several notions applying to graded modules, such as isomorphisms, homogeneous elements, direct sums, generators, and submodules. Additional details on the equivalence of categories $\alpha$ and graded modules can be found in \Cref{subsec:graded-modules}. 

% Following the definition of discrete algebraic persistence module in~\cite{corbet2018}, 
%  define a \textit{persistence module} $\mathcal{M}$ as a pair $\{(M_i,\varphi_{i})\}_{i=1}^n$ consisting of:
% \begin{itemize}
% 	\item a finite-dimensional $\F$-vector space $M_i$, called the $\tth{i}-${\em step}, for each vertex $i$ in $[n]$;.
% 	\item a linear map $\varphi_{i}: M_i \longrightarrow M_{i+1}$, called $\tth{i}-${\em structure map}, for each arrow $(i,i+1)$ in $[n]$. 
% \end{itemize}

% We define $\varphi_{i,j} : M_i \rightarrow M_j$ with $i<j$, as the composition $\varphi_{j-1} \circ \dots \circ \varphi_i$. \\

% \noindent
% A persistence module $\mathcal{M}$ can be associated with a finitely generated graded $\F[x]$-module $\alpha(\mathcal{M})$ under a well-known equivalence of categories~\cite{corbet2018,Carlsson2005DCG}.
% %
% We will explicitly define $\alpha(\mathcal{M})$ in \cref{subsec:graded-modules} where graded modules are treated.
% Under the equivalence $\alpha$, we transpose to persistence modules several notions applying to graded modules, such as isomorphisms, homogeneous elements, direct sums, generators, and submodules.

Let $I(v)$, with $v\in M_b$, be the the persistence module  $\{ (I_i(v), \psi_i(v)) \}_{i=1}^n$ defined by

 \begin{equation*}
     I_i(v) = \begin{cases}
     \langle \varphi_{b,i}(v) \rangle \text{ if }i\geq b,\\
     0\qquad\qquad  \text{otherwise},
     \end{cases}
     \psi_{i}(v) = \begin{cases}
     \varphi_{i}|_{\langle \varphi_{b,i}(v) \rangle} \text{ if }i\geq b,\\
     0\qquad \qquad\text{otherwise},
     \end{cases}
     \quad
 \end{equation*}
 
 \noindent
 where the brackets $\langle \cdot \rangle$ denotes the $\mathbb{F}$-linear space spanned by their argument.

Now, define an {\em (integer) interval} $[b,d]$ with $b \leq d$ to be the finite set of integers $i$ with $b\leq i \leq d$.
The {\em interval module} 
$I_{[b,d]}$ relative to the interval $[b,d]$ is the persistence module $\{(I_i, \psi_{i})\}_{i=1}^n$ such that
 \begin{equation*}
     I_i = \begin{cases}
     \mathbb{F}\qquad \text{if } b\leq i \leq  d,\\
     0\qquad \text{otherwise},
     \end{cases}
     \quad
     \psi_{i} = \begin{cases}
     \operatorname{id}_\mathbb{F}\qquad \text{if } b\leq i <  d,\\
     0\qquad\quad \text{otherwise}.
     \end{cases}
     \quad
 \end{equation*}
 
\begin{remark}
	Fix a degree $b$. For each $v\in M_b$, there exists $d\leq n$ such that
	$$
	I(v)\cong I_{[b,d]}.
	$$
	\label{rem:iso-single-interval-module}
\end{remark} 
Indeed, by construction, each step in $I(v)$ is isomorphic to the vector space $\mathbb{F}$ or to $0$.
The structure maps in $I(v)$ are either isomorphisms or the null map. If an integer $r\leq d-1$ exists, such that $I_{r+1}(v)=0$, we take $d$ to be the minimum of such $r$'s. 
Otherwise, $d=n$.

\begin{definition}{(Interval basis)}
	Given a persistence module $\mathcal{M} = \{(M_i,\varphi_{i})\}_{i=1}^n$, a finite family $\{v_1,\dots, v_N\}\subseteq \bigcup_i M_i$ of homogeneous non-zero elements is an {\em interval basis} for $\mathcal{M}$ if and only if
	
	\[
	\bigoplus_{m=1}^N I(v_m) = \mathcal{M}.
	\]
	
	\label{def:interval-basis}
\end{definition}

\begin{proposition}
Every persistence module $\mathcal{M}$ 
admits an interval basis. 
\label{prop: interval-existence}
\end{proposition}

\begin{proof}
The existence of an interval basis for each $\mathcal{M}$ follows from the interval decomposition corresponding to the Structure Theorem~\cite{Carlsson2005DCG} for finitely generated graded $\mathbb{F}[x]$-modules.
Indeed, the interval decomposition implies that $\mathcal{M}$ decomposes into a direct sum of interval modules of the form.

\begin{equation}
	\mathcal{M}\cong \bigoplus_{m=1}^N I_{[b_m,d_m]},
	\label{eq:interval-decomposition}
\end{equation}

\noindent
where the intervals $[b_m,d_m]$ with $b_m\leq d_m\leq n$ are uniquely determined up to reorderings.
Let $\Psi=\{\Psi_i\}_{i=1}^n: \oplus_{m=1}^N I_{[b_m,d_m]} \longrightarrow \mathcal{M}$ be the persistence module isomorphism  of the interval decomposition in~\eqref{eq:interval-decomposition}. Then, for each summand $I_{[b_m,d_m]}$, the map $\Psi_{b_m}$ detects a vector $v_m\in M_{b_m}$.
By Remark~\ref{rem:iso-single-interval-module}, we have that $I(v_m)\cong I_{[b_m,r_m]}$ for some $b_m\leq r_m \leq n$.
Observe now that, for all indices  $i$ such that $(i,i+1)$ is an arrow in $[n]$, the decomposition isomorphisms satisfy $\varphi_i\circ\Psi_i = \Psi_{i+1}\circ\psi_{i}$, where $\psi_i$ is the structure map of $I_{[b_m,d_m]}$.  
This implies that $r_m=d_m$ for all indices $i\in\mathbb{N}$.
\end{proof}

Decomposing a persistence module via an interval basis consists in retrieving, given a persistence module $\mathcal{M}$, an interval basis ${v_1,\dots,v_N}$, where $N$ equals the number of interval modules in the interval decomposition of Definition \ref{def:interval-basis}.

\section{Related works}
\label{sec:related}

The related works comprise methods for the decomposition of persistence and graded modules.

\subsubsection*{Persistence module decompositions}
Persistence module decomposition methods can be seen as special instances of decomposing quivers of type $A_n$, hence holding for the so-called  {\em zig-zag persistence modules}.
The incremental algorithm introduced in \cite{Carlsson2010ZigzagPersistence} retrieves the interval decomposition by restricting, at each step, to intervals that vanish and constructing on them a flag of images of structure maps. The procedure is a dual counterpart to the kernel flag decomposition we propose in this work. Unlike our approach, the zig-zag decomposition does not aim to recover the generators since generators and intervals are not in one-to-one correspondence for general zig-zag persistence modules.
More recently in\cite{Carlsson2019PersistentViewpoint}, a basis suitable for the zig-zag case, called a {\em canonical form}, has been introduced. It differs from an interval basis in that canonical form consists of a vector space basis for each step in the persistence module. Those bases are selected so that the structure maps connecting the spaces are expressed through matrices in echelon form. 
When comparable, i.e. in the case of equi-oriented quivers, an interval basis is equivalent to the canonical form.
Specifically, an interval basis encodes the data of a canonical form in a compressed way, in the sense that we represent a single generator per interval belonging to the interval decomposition. The structure maps are the original ones, implicitly encoded by the action of $x$. 

Canonical forms can be computed as proposed in \cite{Carlsson2019PersistentViewpoint}, where the decomposition of a zigzag module is tackled from the matrix factorization viewpoint. This approach admits a divide-and-conquer implementation, where the module is subdivided into equally-oriented parts.
After the matrix factorization, the interval lengths can be retrieved by connecting the pivots in the factorization.
Instead, our parallel algorithm leverages graded module presentations to focus on generators rather than basis changes. %
Interval basis elements are found already equipped with their associated interval lengths.
Furthermore, our distributed method is not a divide-and-conquer approach; instead, it performs computations independently across all steps in the persistence module. 

What we call interval basis in this work has multiple germanes in the literature, recently introduced with different purposes.
In \cite{jacquard2022}, a notion similar to the canonical form is called {\em barcode basis}. It is introduced to study the space of transformations from one barcode basis to another as a tool to express in barcode basis terms the decomposition of commutative ladders from \cite{escolar2016ladders} and the possibility of defining partial barcode matchings out of a quiver morphism.
In \cite{gonzalez-diaz2021x}, authors introduce {\em persistence bases} as an isomorphism realizing the interval decomposition (see \eqref{eq:interval-decomposition}) to define barcode matchings induced by persistence module morphisms.
Their direct limit construction is based on the descending chain condition discussed in~\cite{crawley-boevey2015}. 
Our flag of kernels over each filtration step corresponds to one of the flag of spaces, namely the positive flag of kernels with respect to a cut, introduced in~\cite{crawley-boevey2015}.

Finally, our subdivision into homogeneous spaces generated by an interval basis specializes the quotient through the radical functor introduced in~\cite{chacholski2016noise}, where authors characterize tameness conditions in multiparameter persistence.

% Finally, our subdivision into homogeneous spaces generated by an interval
% basis specializes some more general constructions to the totally ordered case.
% An example of this is the construction of the radical functor introduced in~\cite{chacholski2016noise} where authors characterize tameness conditions in multiparameter persistence.
% Secondly, our flag of kernels over each filtration step corresponds to one of the flag of spaces, namely the positive flag of kernels with respect to a cut, introduced in~\cite{crawley-boevey2015} to prove that, in the point-wise finite dimensional case, the descending chain condition on images and kernels is sufficient to guarantee the direct sum decomposition of a persistence module into interval modules.
% The descending chain condition is exploited in \cite{gonzalez-diaz2021x}, authors introduce {\em persistence bases} as an isomorphism realizing the interval decomposition (see \eqref{eq:interval-decomposition}) to define barcode matchings induced by persistence module morphisms.

\subsubsection*{Graded module decompositions}
% classical decompositions
Given a presentation matrix, many methods exist in the literature to retrieve a minimal system of generators for a graded $\F[x]$-module.

As noticed in \cite{lesnick-wright2022bigraded-present}, extracting a minimal system of generators can be seen as a specialization to $\F[x]$ coefficients of classical Gr{\"o}bner basis extraction algorithms \cite{schreyer1980berechnung,faugere1999new,faugere2002new} for multigraded $\F[x_1,\dots, x_n]$-modules, widely implemented in software packages \cite{eisenbud2001computations,greuel2002singular,bosma1997magma,CoCoA}.
See \cite{rutman1992} for Gr{\"o}bner bases of modules and primary decompositions.
As already pointed out in \Cref{sec:intro}, a minimal system of generators is not, in general, an interval basis (see \labelcref{eq:presentation-classical}).

Instead, an interval basis is computable by reducing the presentation matrix into the Smith Normal Form (see \cref{sec:graded-modules}).
To the best of our knowledge, the authors of \cite{skraba+2013x} first introduced, for the graded case, an algorithm for the SNF reduction.
The general procedure's complexity depends solely on the overall size of the presentation matrix. This motivates us to specialize in \cref{sec:gradedSmith} the same procedure to presentation matrices obtained through the construction in \cite{corbet2018}, relevant when starting from a persistence module. 
We provide a complexity estimate for that specific case, which takes advantage of the sparsity given by the block structure in the presentation matrix of a persistence module.
On the contrary, in our parallel decomposition algorithm in \cref{sec:Decomposition}, we take advantage of the independence properties under the action of $x$ of the interval basis to propose a parallel approach with output-dependent complexity.

\section{Parallel computation of an interval basis} \label{sec:Decomposition}

In this section, we present a parallel algorithm for the computation of an interval basis of a persistence module $\mathcal{M}$ (see \cref{alg:ssd-real} \cref{subsec:real} for a specialization to the real coefficient case). 
The whole, length-$n$ persistence module (spaces and structure maps) is assumed to be input to a pool of $n$ processors. Then, each step $M_i$ can be processed independently by processor $i$. The idea is that a \textit{single step decomposition} routine (\cref{alg:ssd-general}) takes care of the bars being born at step $i$; to discriminate these from bars that are merely traveling through step $i$, we make use of the flag of vector spaces given by the kernels of iterated composites of the structure maps. Simple linear algebra then shows that we can recover basis vectors that form an interval basis of $\mathcal{M}$. 
Using such a flag of kernels takes implicit advantage of the death of bars along the barcode, gradually reducing the size of the maps involved and achieving better efficiency than a method that does not take this into account, such as the graded SNF. This statement is justified in the complexity analysis in \cref{sec:complexity}.

Consider a persistence module $\mathcal{M} = \{(M_i,\varphi_i)\}_{i=1}^n$. Without loss of generality, we assume an additional structure map $\varphi_n:M_n\to M_{n+1}$ to be the null one. 
This way, the treatment of the final step $M_n$ has no qualitative difference from the others.
Denote by $m_i$ the dimension of the space $M_i$ and with $r_i$ the dimension of $\im (\varphi_{i-1})$. For each $i$ there is a flag of vector subspaces of $M_i$ given by the kernel of the maps $\varphi_{i,j}$:

\begin{equation}
    0\subseteq \ker (\varphi_{i,i+1}) \subseteq \ker (\varphi_{i,i+2})\subseteq \dots \subseteq \ker (\varphi_{i, n+1}) = M_i,
\end{equation}

\noindent
where the last equality holds by the assumption on the last map above.
Denote for simplicity each space $\ker (\varphi_{i,j})$ as $V_j^i$. 

An \defemph{adapted basis} for the flag in $M_i$ is given by a set of linearly independent vectors $\mathcal{V}^i = \{v_1, \dots, v_{m_i}\}$, and an index function $J:\mathcal{V}^i\to \{1,\dots, n-i+1\}$, such that

\begin{equation}
    V_{i+s}^i = \langle \{v\in \mathcal{V}^i\ |\ J(v) \leq s\} \rangle \quad \forall s,\ 1\leq s\leq n-i+1.
\end{equation}

In words, an adapted basis is an ordered list of vectors in $M_i$ such that for every $j$, the first $\dim V_j^i$ vectors are a basis of $V_j^i$ (an empty list is a basis of the trivial space). The index function $J$ gives precisely this ordering. Notice that
\begin{lemma} \label{doubly_adapted_basis}
Without loss of generality, it is possible to choose an adapted basis for $M_i$ so that it contains a basis of $\im (\varphi_{i-1})$ as a subset.
\end{lemma}

\begin{proof}
Let us consider an adapted basis $\mathcal{V}^i = \left( t_1,\dots, t_{m_i} \right)$ for the flag of kernels in $M_i$, with the vectors $t_1,\dots, t_{m_i}$ ordered by index function $J$. We construct the desired basis explicitly: set $\underline{\mathcal{V}}^i = \{t_1\}$. For every $s=2,\dots, m_i$, if $t_s\notin \langle \underline{\mathcal{V}}^i \rangle + \im(\varphi_{i-1})$ add the vector $t_s$ to $\underline{\mathcal{V}}^i$. Otherwise, it must hold that $t_s = \sum_{a<s}\lambda_at_a + x$ with $x\in \im(\varphi_{i-1})$. Then we add to $\underline{\mathcal{V}}^i$ the vector $x = t_s - \sum_{a<s}\lambda_at_a$. In this way, $\underline{\mathcal{V}}^i$ is another adapted basis, and the elements added by the second route form a basis of $\im (\varphi_{i-1})$.
\end{proof}

Therefore, we shall assume that each basis $\mathcal{V}^i$ is in the form of Lemma \ref{doubly_adapted_basis}. 
Let us introduce two subspaces of $\langle \mathcal{V}^i \rangle$: it holds that $\langle \mathcal{V}^i \rangle  = \langle \mathcal{V}^i_{\text{Birth}} \rangle \oplus \langle\mathcal{V}^i_{\im}\rangle$, where $\mathcal{V}^i_{\im}$ is the subset of $\mathcal{V}^i$ made of a basis of $\im \varphi_{i-1}$, and $\mathcal{V}^i_{\text{Birth}}$ is its complement. 
We aim to construct a basis for the whole persistence module using the adapted bases at each step $i$.
\begin{definition} \label{def:adapted_basis_set}
Let us define $\mathscr{V} := \bigcup_i \mathcal{V}^i_{\text{Birth}}$.
\end{definition}
\noindent
$\mathscr{V}$ is the set of elements of the adapted basis in each degree $i$ that are not elements of $\im (\varphi_{i-1})$. We refer to \cref{subsec:graded-modules} for the definition of degree of an element. In the following, we prove that $\mathscr{V}$ is in fact an interval basis for $\{M_i,\varphi_i\}_{i=1}^n$.

\begin{lemma}
\label{lem:injection-of-restriction}
For any $i<j$, define the set $T := \langle\{v\in \mathcal{V}^i\ |\ J(v)>j-i\}\rangle$.
The restriction $\varphi_{i,j}|_{T}$ of the map $\varphi_{i,j}$ is an injection.
\end{lemma}
\begin{proof}
By definition of $T$, it holds $M_i = \ker (\varphi_{i,j}) \oplus T$. If the restriction of $\varphi_{i,j}$ onto $T$ were not injective, then $T$ and $\ker (\varphi_{i,j})$ would have nontrivial intersection. This is a contradiction.
\end{proof}

\begin{lemma}
\label{lem:intersection-image-new-things}
For any $i<j\in \mathbb{N}$, it holds

\begin{equation}
    \varphi_{i,j}\left(\langle \mathcal{V}^i_{\text{Birth}} \rangle\right) \ \cap \ \varphi_{i,j}\left( \langle \mathcal{V}^i_{\im} \rangle \right) = \{0\}.
\end{equation}

\end{lemma}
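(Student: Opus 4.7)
The plan is to pick an arbitrary $w$ in the claimed intersection and reduce everything to a coordinate argument in the adapted basis. By hypothesis, $w=\varphi_{i,j}(x)=\varphi_{i,j}(y)$ for some $x\in\langle\mathcal{V}^i_{\text{Birth}}\rangle$ and $y\in\langle\mathcal{V}^i_{\im}\rangle$, so $x-y\in\ker\varphi_{i,j}=V_j^i$. The entire strategy is to show that this forces $x$ on its own to lie in $V_j^i$, hence $w=\varphi_{i,j}(x)=0$.

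The decisive structural ingredient is that, by the construction preceding the lemma (Remark~\ref{doubly_adapted_basis}), $\mathcal{V}^i_{\text{Birth}}$ and $\mathcal{V}^i_{\im}$ are disjoint subsets of $\mathcal{V}^i$ whose union is a basis of $M_i$. This means that writing
\[
x=\sum_{v\in\mathcal{V}^i_{\text{Birth}}}\alpha_v v,\qquad y=\sum_{v\in\mathcal{V}^i_{\im}}\beta_v v,
\]
yields a single expansion of $x-y$ in the basis $\mathcal{V}^i$ in which the $\mathcal{V}^i_{\text{Birth}}$-coefficients come entirely from $x$ and the $\mathcal{V}^i_{\im}$-coefficients come entirely from $-y$. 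I would then invoke the adapted-basis characterization $V_j^i=\langle\{v\in\mathcal{V}^i:J(v)\le j\}\rangle$: the membership $x-y\in V_j^i$ forces every coefficient attached to a basis vector with $J(v)>j$ to vanish. Restricting this vanishing to $v\in\mathcal{V}^i_{\text{Birth}}$ gives $\alpha_v=0$ whenever $J(v)>j$, so $x$ is supported on $\{v\in\mathcal{V}^i_{\text{Birth}}:J(v)\le j\}\subseteq V_j^i$; applying $\varphi_{i,j}$ then yields $w=0$.

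The only delicate point to verify carefully is the cleanness of the splitting of the coordinates of $x-y$ into a $\mathcal{V}^i_{\text{Birth}}$-part and a $\mathcal{V}^i_{\im}$-part; this is what prevents a possible ``mixing'' that would obstruct the coordinate-wise vanishing argument. Because this splitting is handed to us by the direct sum $\langle\mathcal{V}^i\rangle=\langle\mathcal{V}^i_{\text{Birth}}\rangle\oplus\langle\mathcal{V}^i_{\im}\rangle$, no serious obstacle arises: the proof reduces to routine bookkeeping of coordinates in the adapted basis, together with the kernel flag characterization of $V_j^i$.
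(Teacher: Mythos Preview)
Your proof is correct and follows essentially the same approach as the paper's: both exploit the direct-sum splitting $\langle\mathcal{V}^i\rangle=\langle\mathcal{V}^i_{\text{Birth}}\rangle\oplus\langle\mathcal{V}^i_{\im}\rangle$ together with the adapted-basis description $V_j^i=\langle\{v\in\mathcal{V}^i:J(v)\le j\}\rangle$ to force the high-$J$ coefficients of the difference to vanish. Your version is slightly more streamlined in that you read off the vanishing of coordinates directly rather than routing through \cref{lem:injection-of-restriction}, but the underlying argument is the same.
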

\begin{proof}
Suppose that the intersection contains a nonzero vector $u$:

\[
0 \neq u = \varphi_{i,j}\left(\sum_{v_k\in \mathcal{V}^i_{\text{Birth}}}\lambda_k v_k\right) = \varphi_{i,j}\left(\sum_{w_l\in \mathcal{V}^i_{\im}}\mu_l w_l\right).
\]

Denote by $u_\text{B}$ and $u_\text{I}$ the vectors

\[
u_\text{B} = \sum_{\substack{J(v_k)>j-i\\ v_k \in \mathcal{V}^i_\text{Birth}}}\lambda_k v_k, \quad u_\text{I} =  \sum_{\substack{J(w_l)>j-i\\ w_l\in \mathcal{V}^i_{\im}}}\mu_l w_l.
\]

Since all the elements $v$ such that $J(v)\leq j-i$ belong to $\ker(\varphi_{i,j})$, it holds $u = \varphi_{i,j}\left(u_\text{B}\right) = \varphi_{i,j}\left(u_\text{I}\right)$. Then, $u$ is the image through $\varphi_{i,j}$ of an element of $T = \langle\{v \in \mathcal{V}^i\ |\ J(v)>j-i\}\rangle$. On the other hand, also, the difference $u_\text{B} - u_\text{I}$ belongs to the same space and is mapped to zero by $\varphi_{i,j}$. The restriction of $\varphi_{i,j}$ to $T$ is injective because of Lemma \ref{lem:injection-of-restriction}, therefore it must be $u_\text{B} - u_\text{I} = 0$. 
Since $\langle \mathcal{V}^i \rangle  = \langle \mathcal{V}^i_{\text{Birth}} \rangle \oplus \langle\mathcal{V}^i_{\im}\rangle$, it must be $u_\text{B} = u_\text{I} = 0$, hence $u=0$.
\end{proof}
We move now to the main theorem of this section. For convenience (and without loss of generality) we can assume that $\mathcal{V}^0$ is empty, and that both $M_k$ and $\varphi_k$ are trivial for $k\leq 0$. 
\begin{theorem}
    The set $\mathscr{V}$ is an interval basis for the persistence module $M$.
    \label{th:correctness}
\end{theorem}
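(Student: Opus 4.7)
The goal is to prove $\bigoplus_{v \in \mathscr{V}} I(v) = \mathcal{M}$, which, since direct sums of persistence modules decompose stepwise, reduces to showing the internal direct sum identity $M_j = \bigoplus_{v \in \mathscr{V}} I(v)_j$ at each step $j$. My plan proceeds in two stages: first decompose $M_j$ as a direct sum indexed by birth step $i$, then further split each block into the one-dimensional pieces $I(v)_j$ contributed by individual elements of $\mathscr{V}$. The main obstacle I anticipate is establishing the directness of the first-stage decomposition: pushing a direct sum forward under a structure map does not in general preserve directness, because the kernel can collapse distinct summands onto one another. The resolution is exactly the content of \cref{lem:intersection-image-new-things}, which I invoke iteratively.

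For the first stage, I would fix $j$ and prove by induction on $k \leq j$ the identity
\[
\varphi_{k,j}(M_k) \;=\; \bigoplus_{i=1}^{k} \varphi_{i,j}\bigl(\langle \mathcal{V}^i_{\text{Birth}} \rangle\bigr).
\]
The base case $k=1$ is immediate since $\mathcal{V}^1_{\text{Im}} = \emptyset$, so $M_1 = \langle \mathcal{V}^1_{\text{Birth}} \rangle$. For the inductive step I would split $M_k = \langle \mathcal{V}^k_{\text{Birth}} \rangle \oplus \langle \mathcal{V}^k_{\text{Im}} \rangle$ using \cref{doubly_adapted_basis}, note that $\langle \mathcal{V}^k_{\text{Im}} \rangle = \im \varphi_{k-1}$ so that $\varphi_{k,j}(\langle \mathcal{V}^k_{\text{Im}} \rangle) = \varphi_{k-1,j}(M_{k-1})$, and then expand this latter space via the inductive hypothesis. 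Directness of the resulting combined sum is exactly what \cref{lem:intersection-image-new-things} provides: it guarantees that $\varphi_{k,j}(\langle \mathcal{V}^k_{\text{Birth}} \rangle)$ meets $\varphi_{k,j}(\langle \mathcal{V}^k_{\text{Im}} \rangle)$ only at zero. Taking $k = j$ yields $M_j = \bigoplus_{i=1}^{j} \varphi_{i,j}(\langle \mathcal{V}^i_{\text{Birth}} \rangle)$.

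For the second stage, each block $\varphi_{i,j}(\langle \mathcal{V}^i_{\text{Birth}} \rangle)$ further decomposes into one-dimensional pieces. Its nonzero images are precisely $\{\varphi_{i,j}(v) : v \in \mathcal{V}^i_{\text{Birth}},\ J(v) > j\}$, and by \cref{lem:injection-of-restriction} applied to $T = \langle\{v \in \mathcal{V}^i : J(v) > j\}\rangle$ they are linearly independent in $M_j$. Hence
\[
\varphi_{i,j}\bigl(\langle \mathcal{V}^i_{\text{Birth}} \rangle\bigr) \;=\; \bigoplus_{v \in \mathcal{V}^i_{\text{Birth}}} I(v)_j
\]
with the convention $I(v)_j = 0$ for $v$ already dead at step $j$. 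Combining this with the first stage produces $M_j = \bigoplus_{v \in \mathscr{V}} I(v)_j$ at every step, which by \cref{def:interval-basis} is exactly the statement that $\mathscr{V}$ is an interval basis for $\mathcal{M}$.
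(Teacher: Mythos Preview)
Your proof is correct and rests on the same two lemmas as the paper's, used for the same purposes; the difference is organizational. The paper first establishes the spanning identity $\im(\varphi_{i-1}) = \sum_{\deg v < i} I_i(v)$ by induction and then proves directness in a separate descent argument, peeling off the highest-degree contributions using \cref{lem:intersection-image-new-things} and \cref{lem:injection-of-restriction}. You instead fold directness into the induction itself by strengthening the hypothesis to the full direct-sum statement $\varphi_{k,j}(M_k) = \bigoplus_{i \le k} \varphi_{i,j}(\langle \mathcal{V}^i_{\text{Birth}}\rangle)$, invoking \cref{lem:intersection-image-new-things} once per step to adjoin the new summand. This packaging is a little tighter, but the mathematical content is the same.
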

\begin{proof}
Say that $\mathscr{V} = \{v_1, \dots, v_N\}$. Each vector $v_j$ in the set $\mathscr{V}$ induces an interval module $I(v_j)$. We want to show that $M = \bigoplus_{j=1}^NI(v_j)$. To do so, let us see that for each $0\leq i\leq n$, the space $M_i$ is exactly  $\bigoplus_{j=1}^NI_i(v_j)$. By construction, we know that

\begin{equation}
    M_i = \im (\varphi_{i-1}) \oplus \langle\{v \in \mathcal{V}^i\ |\ v\notin \im (\varphi_{i-1})\}\rangle \  = 
    \im (\varphi_{i-1})
    \oplus
    \bigoplus_{\substack{v \in \mathscr{V}\\ \deg v =i}}I_i(v)
\end{equation}

\noindent
All we have to show is that $\im (\varphi_{i-1})$ can be written as $\bigoplus_{\substack{v \in \mathscr{V}\\ \deg v <i}}I_i(v)$. First, we will see that a sum decomposition holds.
By definition, 
an element in the sum belongs to the image of $\varphi_{i-1}$.
We can show the converse by induction over the step-index $i\in \N$.
For $i=0$, consider $M_0 = \langle \mathcal{V}^0 \rangle$.
None of the elements of $\mathcal{V}^0$ belongs to $\im(\varphi_{-1})$.
It holds that the image of $\varphi_0$ is contained in the sum as desired.
Suppose by induction that for any $i-1$, the image of $\varphi_{i-2}$ is contained in the sum.
Then, since $M_{i-1} = \langle \{v \in \mathcal{V}^{i-1}\ |\ v\notin \im (\varphi_{i-2})\}\rangle \ \oplus \ \im (\varphi_{i-2})$, it holds that

\begin{equation}
    \im (\varphi_{i-1}) = \sum_{\substack{v \in \mathcal{V}^{i-1}\\ v\notin \im (\varphi_{i-2})}} I_i(v) + \varphi_{i-1}(\im (\varphi_{i-2})).
    \label{eq:image-sum-here}
\end{equation}

Therefore, by the induction hypothesis, we have that.

\begin{equation*}
    \im(\varphi_{i-1}) \subseteq \sum_{\substack{v \in \mathscr{V}\\ \deg v< i}} I_i(v).
\end{equation*}

Now that we have shown the sum decomposition, it remains to be seen that this sum is direct.
We proceed iteratively from degree $i-1$ down to $0$.
We show that only a trivial combination of elements in the sum gives the null element in $\im(\varphi_{i-1})$. 
By \labelcref{eq:image-sum-here}, we know that there exists $x\in \im(\varphi_{i-2}) \subseteq M_{i-1}$ along with a finite number of coefficients $\lambda_r$ and interval basis elements
$v_r\in \mathcal{V}^{i-1}$, such that $v_r\notin \im (\varphi_{i-2})$, so that
\begin{equation*}
    0 = \varphi_{i-1}(x) + \sum_{r} \lambda_r \varphi_{i-1}(v_r) .
    \label{eq:image-sum}
\end{equation*}
Because of Lemma \ref{lem:intersection-image-new-things}, it must be
\[
\varphi_{i-1}(x) = \sum_{r} \lambda_r \varphi_{i-1}(v_r) = 0.
\]
Let us now focus on the second equality.
We can restrict to indices $r$ such that  $\varphi_{i-1}(v_r) \neq 0$. 
If there are no such indices, there is nothing to prove.
%
% since otherwise $\lambda_r\varphi_{i-1}(v_r)$ is a trivial summand and we are done.
%
As all summands are different from zero, the index $J(v_{r})$ of the vectors in the adapted basis has to be greater than $1=i-(i-1)$. Hence, because of Lemma \ref{lem:injection-of-restriction}, it holds that $\sum_{r}\lambda_rv_r = 0$. Since the elements in $\{v_{r} | \deg v_{r} = i-1\}$ are linearly independent it must be $\lambda_r= 0$ for any $r$.

We iteratively repeat the same reasoning for $\varphi_{i-2}(x)=0$ where $x$ has now degree $i-2$. Since there are finitely many vectors, this process has an end, and this concludes our proof.

% Suppose to have a non trivial combination $w_1+\dots + w_k= 0$, where each $w_q$ belongs to $I_i(v_{t_q})$. Suppose that the $w_1,\dots, w_k$ are ordered according to the degree of the element $v_{t_q}$ that generates the interval module they belong to. Let us say that these elements have a maximum degree $l<i$. Then, it holds

% \begin{equation*}
%     w_1+\dots + w_k = \varphi_{l,i}(x) + \sum_{
%     \substack{v_{t_r} \\ \deg v_{t_r} = l}
%     }\lambda_r\varphi_{l,i}(v_{t_r}),
% \end{equation*}

% \noindent
% where $x \in \im(\varphi_{l-1})$. Because of \cref{lem:intersection-image-new-things}, it must be

% \[
% \varphi_{l,i}(x) = \sum_{
%     \substack{v_{t_r} \\ \deg v_{t_r} = l}
%     }\lambda_r\varphi_{l,i}(v_{t_r}) = 0.
% \]

% On the other hand, we also assumed that the $w_q$ are different from zero; therefore, the index $J(v_{t_q})$ of the vectors in the adapted basis has to be greater than $i-l$. Hence, because of \cref{lem:injection-of-restriction}, it holds that $\sum_{\{v_{t_r} | \deg v_{t_r} = l\}}\lambda_r\varphi_{l,i}(v_{t_r}) = 0$ implies $\sum_{\{v_{t_r} | \deg v_{t_r} = l\}}\lambda_r v_{t_r} = 0$. Since the $\{v_{t_r} | \deg v_{t_r} = l\}$ are linearly independent it must be $\lambda_r= 0$ for any $r$, and therefore $w_r = 0$. The same idea can be repeated for all the previous elements $w_1, \dots, w_s$, coming from interval modules generated by vectors with degrees less than $l$. Since there are finitely many vectors, this process has an end, and it shows that all the vectors $w_1,\dots, w_k$ are 0 and the sum is direct.
\end{proof}

We now explicitly construct the set $\mathscr{V}$. We must first obtain sets $\mathcal{V}^i$ to do so. 
\begin{remark}
Notice that the construction of each $\mathcal{V}^i$ is independent from the others. Therefore, they can be computed simultaneously. 
\end{remark}

\subsubsection*{Construction of $\mathcal{V}^i_\text{Birth}$} 
\label{par:single-space-decomposition}
We first recall that a simple basis extension algorithm is given by the procedure described in the following \cref{alg:basis-extension}.  
\begin{algorithm}
\SetAlgoLined
\textbf{Input}: linearly independent vectors $\mathcal{U}=\{u_1,\dots, u_r\}$, linearly independent vectors $\mathcal{W} = \{w_1,\dots, w_n\}$ \;
\KwResult{minimal set of vectors $w_{i_1},\dots, w_{i_p} \notin \langle\mathcal{U}\rangle$ such that $\langle \mathcal{U}\cup \{w_{i_1},\dots, w_{i_p}\} \rangle=\langle\mathcal{U}\cup \mathcal{W}\rangle$ }
$\mathcal{R} = \{\}$\;
\For{i=1,\dots, n}{
    \If{$\rank (\mathcal{U}) < \rank (\mathcal{U}\cup \{w_i\})$}{
        $\mathcal{U} = \mathcal{U} \cup \{w_i\} $\;
        $\mathcal{R} = \mathcal{R} \cup \{w_i\} $\;
    }
}
\Return{$\mathcal{R}$}
\caption{Basis completion algorithm}
\label{alg:basis-extension}
\end{algorithm}
The set $\mathcal{W}$ is ordered, and its elements are added to $\mathcal{U}$ in their ascending order in $\mathcal{W}$, so that $\mathcal{U}$ is extended to a basis of $\langle\mathcal{U}\rangle + \langle\mathcal{W}\rangle$. In the following, we refer to the extension of basis $\mathcal{U}$ by the vectors in set $\mathcal{W}$ through \cref{alg:basis-extension} as \texttt{bca}$(\mathcal{U},\mathcal{W})$. 

% Column reduction
Secondly, we describe \cref{alg:col-red}, performing the standard left-to-right column reduction on matrix $R$. We include the pseudo-code to illustrate the input and output representations needed, namely matrix $C$ and the indices of the zeroed-out columns, so as to reduce the size of the matrices treated in \cref{alg:ssd-general} by ignoring the known zero columns. 

\begin{algorithm}[!]
\SetAlgoLined
\textbf{Input}: $a\times b$ matrix $R$,
$b\times b$ matrix $C$, $I \subseteq \{ 1,\dots, b \}$ set of indices of the zero columns \\
\textbf{Result}: column-reduced $R$, change of basis matrix $C$ , indices of the new zero columns $I'$

$I' \leftarrow \{ \}$ \;
\For{$i \in \{ 1, \dots, b \}$}{
$r_i \leftarrow$ the $\tth{i}$-column in $R$ \;
\If{$i \notin I$}{
$L \leftarrow \{ 1, \dots, i-1 \} \setminus (I \cup I') $ \;
Perform left-to-right reduction of $r_i$ using columns $r_j$, for $j\in L$ \;
Perform the same column operations on $C$ \;
\If{$r_i=0$}{ $I' \leftarrow I' \cup i$}
}
}
\Return{R, C, I'}
 \caption{Column reduction}
 \label{alg:col-red}
\end{algorithm}

\begin{algorithm}[!]
\SetAlgoLined
\textbf{Input}:  map $\varphi_{i-1}:M_{i-1}\to M_i$ ;  \hfill \tcp{matrix w.r.t. fixed bases} 
maps $\{\varphi_j:M_j\to M_{j+1})\}, i\leq j\leq n$ ; \hfill \tcp{matrices w.r.t. fixed bases}
\KwResult{$\mathcal{V}^i_\text{Birth}$ and its index function $J$}
\vspace{0.4cm}
Reduce $\varphi_{i-1}$ and find a basis $\mathcal{U} = \{u_1,\dots, u_k\}$ of $\im (\varphi_{i-1})$\;
$k := \dim \im(\varphi_{i-1})$\;
$R \leftarrow \operatorname{Id}:M_i\to M_i$; \hfill \tcp{Initialize identity matrix}
$C \leftarrow \operatorname{Id}:M_i\to M_i$; \hfill \tcp{Initialize identity matrix}
$r \leftarrow \dim(M_i)$\;
$\mathcal{V}^i_\text{Birth} \leftarrow \{\}$; \hfill \tcp{Initialize empty interval basis}
$\text{inds} \leftarrow \{ \}$; \hfill \tcp{Set of indices of zero columns}
\vspace{0.2cm}

\For(\hfill \tcp*[h]{From $i$ to end}){$s = 0,\dots,n-i$}{
    $R \leftarrow \varphi_{i+s} \cdot R$; \hfill \tcp{Matrix of the map from $i$ to $i+s$}
    $R,C,\text{newInds} \leftarrow$ ColumnReduction($R,C,\text{inds}$)\;
    $r' \leftarrow \rank (R) = r - \vert \text{newInds} \vert $\;
    \If(\hfill \tcp*[h]{If some bar has died}){$r'<r$}{
        $B \leftarrow $ basis of $\ker (R) = \{ Ce_i , i \in \text{newInds} \}$\;
        $B \supseteq B_{\text{new}} \leftarrow \textrm{bca}(\mathcal{U}, B)$; \hfill \tcp{Complete $\mathcal{U}$ to $B$ by $B_{\text{new}}$}
        $\mathcal{U} \leftarrow \mathcal{U}\cup B_{\text{new}}$; \hfill \tcp{Update $\mathcal{U}$}
        $\mathcal{V}^i_\text{Birth} \leftarrow  \mathcal{V}^i_\text{Birth}\cup B_{\text{new}}$; \hfill \tcp{Update interval basis}
        \For{$v \in B_{\text{new}}$}{
        $J(v) \leftarrow s+1$; \hfill \tcp{Set appropriate index} 
        }
        $r \leftarrow r'$; \hfill \tcp{Update "remaining" rank}
        $\text{inds} \leftarrow \text{inds} \cup \text{newInds}$; \hfill \tcp{Update zero columns}
    }
    \tcp{If all bars dead or enough generators} \
    \If{$r=0$ \textbf{or} $|\mathcal{V}^i_\text{Birth}|+k= \dim M_i$}{
    \textbf{break}\;
    }
}
\Return{$\mathcal{V}^i_\text{Birth}$, $J$} 
 \caption{\texttt{ssd} - Single step decomposition of step $M_i$}
 \label{alg:ssd-general}
\end{algorithm}

We next give a general algorithm to construct the set $\mathcal{V}^i_\text{Birth}$ for a given $M_i$ of persistence module $\mathcal{M}$.
To find $\mathcal{V}^i_\text{Birth}$ we need only to iteratively complete a basis of $\ker (\varphi_{i,j})$ to a basis of $\ker( \varphi_{i,j+1})$ within the complement of $\im (\varphi_{i-1})$.
This is done by first computing a basis of $\im (\varphi_{i-1})$, which initializes the basis to be completed.
Then, for each $s$, a basis of $\ker (\varphi_{i,i+s})$ is computed and \Cref{alg:basis-extension} is applied to extend the current basis with the basis of $\ker (\varphi_{i,i+s})$.
%
% To find an adapted basis $\mathcal{V}^i$ we need only to iteratively complete a basis of $\ker (\varphi_{i,j})$ to a basis of $\ker( \varphi_{i,j+1})$, using for example \cref{alg:basis-extension}. 
% In general, the basis obtained through \cref{alg:basis-extension} will not contain a basis of the space $\im (\varphi_{i-1})$, i.e., it will not be in the form described in \cref{doubly_adapted_basis}. However, this is unnecessary, as our goal is only to compute the basis vectors not on the image of previous maps. 
The entire procedure to construct $\mathcal{V}^i_\text{Birth}$ from $M_i$ and the structure maps is described in \cref{alg:ssd-general}.
In the following, we refer to the construction of $\mathcal{V}^i_\text{Birth}$ through \cref{alg:ssd-general} as \texttt{ssd}$(M_i)$. 
\newpage

\subsubsection*{Construction of $\mathscr{V}$} 
\label{par:persistence-module-decomposition}
Once the decomposition of each space is performed, it is immediate to assemble the interval basis $\mathscr{V}$. Further, 
by storing together with the interval basis the indices of appearance and death of its elements, we can obtain the persistence diagram of module $\{(M_i,\varphi_i)\}_i$ without increasing the computational cost. This is the content of \cref{alg:total-decomposition}, which summarizes the procedures introduced so far into a single routine that takes a persistence module and returns its interval basis and persistence diagram.  

\begin{algorithm}
\SetAlgoLined
\textbf{Input}: persistence module $\{M_i, \varphi_i\}_{i=1}^n$;
    \hfill \tcp{$n-1$ matrices w.r.t. fixed bases}
\KwResult{interval basis $\mathscr{V}$ and persistence diagram}

$\varphi_0:= $ empty matrix with $\dim M_0$ rows and $0$ columns\;
$\varphi_{n+1}:= $ empty matrix with $0$ rows and $\dim M_n$ columns\;

$\mathscr{V} = \{\}$\;
$PD = \{\}$\;
\Parfor($i = 1,\dots, n+1$ ){
    $\mathcal{V}^i_\text{Birth}, J = \operatorname{ssd}(\varphi_{i-1}, \{\varphi_j\}_{j\geq i})$\;
    \For{$v\in \mathcal{V}^i_\text{Birth}$}{
    $\mathscr{V} \leftarrow \mathscr{V} \cup v$ \;
    $PD \leftarrow PD \cup (i, i+J(v))$\;
    }
}
\Return{$\mathscr{V}$, $PD$}
 \caption{Persistence module decomposition}
 \label{alg:total-decomposition}
\end{algorithm}

\FloatBarrier
\noindent
We refer to the decomposition of \cref{alg:total-decomposition} as \texttt{pmd}$(M)$. 

\begin{lemma}{(\textbf{Correctness})}
    The output of \cref{alg:total-decomposition} is an interval basis.
\end{lemma}
\begin{proof}
The set $\mathscr{V}$ from \cref{alg:total-decomposition} is the union of sets $\mathcal{V}^i_\text{Birth}$ from \cref{alg:ssd-general}, so it matches the definition of $\mathscr{V}$ given in Definition \ref{def:adapted_basis_set}. Then correctness follows from \cref{th:correctness}.
\end{proof}

\begin{example}
Consider the $\R$-persistence module specialized from \cref{ex:RunningExPersMod}.

\[ 0 \ \underset{\scalebox{0.6}{$\begin{pmatrix} 0 
\end{pmatrix}$}}{\overset{\varphi_0}{\longrightarrow}} \ \R \ \underset{\scalebox{0.6}{$\begin{pmatrix} 1 \\ 0 \end{pmatrix}$}}{\overset{\varphi_1}{\longrightarrow}} \ \R^2 \ \underset{\scalebox{0.6}{$\begin{pmatrix} 1 & 1 \end{pmatrix}$}}{\overset{\varphi_2}{\longrightarrow}} \ \R \ \underset{\scalebox{0.6}{$\begin{pmatrix}  0 \end{pmatrix}$}}{\overset{\varphi_3}{\longrightarrow}} \ 0. \]

 We showcase the procedure of \cref{alg:total-decomposition} and compute its interval basis. This example matches the persistence module generated by persistent homology in \cref{fig:example_classical}.
 For $i=0,1,2,3$ we need to compute $\mathcal{V}^i_{\text{Birth}}$. 
 Notice that $\varphi_0$ is the null map, so the flag for the first step is trivial and $\mathcal{V}^0_{\text{Birth}}$ is empty. 
 
 For $i=1$, we have $\im(\varphi_{i-1}) = 0$ and $\ker (\varphi_{1,2}) = \ker (\varphi_{1,3}) = 0$, so $\R = \ker (\varphi_{1,4})$. By \texttt{ssd}, we extend a basis of $\im (\varphi_0)$ (which is empty) to a basis of $\R$, which yields vector $1$. Then $\mathcal{V}^1_{\text{Birth}} = \{ 1\}$ with persistence pair $(1,4)$. 
 
 For $i=2$, we have $\im(\varphi_{i-1}) = \langle${$\left(\begin{smallmatrix} 1 \\ 0 \end{smallmatrix}\right)$}$\rangle$. Furthermore $\ker (\varphi_{2,3}) = \langle${$\left(\begin{smallmatrix} 1 \\ -1 \end{smallmatrix}\right)$}$ \rangle$, so we extend the basis of $\im (\varphi_1)$ against the basis of $\ker (\varphi_{2,3})$ obtaining set $\{
 \left(\begin{smallmatrix} 1 \\ -1 \end{smallmatrix}\right),
 \left(\begin{smallmatrix} 1 \\ 0 \end{smallmatrix}\right)
 \}$, which spans $\R^2$, so \texttt{ssd} terminates setting $\mathcal{V}^2_{\text{Birth}} = \{ ${$\left(\begin{smallmatrix} 1 \\ -1 \end{smallmatrix}\right)$}$\}$ with persistence pair $(2,3)$. 
 
 For $i=3$, we have $\im(\varphi_{i-1}) = \R$, so $\mathcal{V}^3_{\text{Birth}}$ is empty. 

\noindent
Finally, the interval basis is $ \mathscr{V} = \{ 1 ,$ {$\left(\begin{smallmatrix} 1 \\ -1 \end{smallmatrix}\right)$} $\}$, with persistence diagram $PD = \{ (1,4),(2,3)\}$. It is (up to the irrelevant sign) the same result as in the example in \cref{ex:RunningExPersMod}, as vector $1$ in degree $1$ corresponds to the first generator $v_1$, and vector {$\left(\begin{smallmatrix} 1 \\ -1 \end{smallmatrix}\right)$} in degree $2$ corresponds to the difference of the second and third $v_2 - v_3 = xv_1 - v_3$. 
\end{example}

\section{Computational complexity}
\label{sec:complexity}

In this section, we estimate the computational complexity of the parallel algorithm for decomposing a persistence module just presented in~\cref{sec:Decomposition}.
The evaluation of our parallel~\cref{alg:total-decomposition} depends on the output barcode in terms of the number of intervals and their length. In the final part, we discuss the ``unbalanced'' case, when some steps are significantly more complex than others. We argue that, in the worst-case, our parallel algorithm is as expensive as the known procedure of the graded Smith normal form, here specialized to a persistence module presentation matrix as described in~\cite{corbet2018}. \\

Let us assume that our persistence module has steps $M_i$, each having dimension $m_i$ for $i$ varying in $\{ 0,\dots, n+1 \}$, where $m_0=m_{n+1}=0$,  $m=\sum_i m_i$, and $\bar{m}=\max_{i} m_i$.
We assume a parallel implementation of \cref{alg:total-decomposition}.
Hence, we focus on the single-step decomposition performed by~\cref{alg:ssd-general} on each step $i=1,\dots, n$. 
First, a \texttt{ColumnReduction} (\cref{alg:col-red}) is called only once, before entering the outer for-loop, to extract the image of $\varphi_{i-1}$, which reduces a matrix of size $m_{i}\times m_{i-1}$. 
We observe that, inside the inner for-loop, the total number of operations depends on the parameter $k_i=m_i - r_i$, where $r_i=\rank (\varphi_{i-1})$, and on the variable parameter $r_s$ that counts the number of columns that have not yet been reduced to zero. 
We claim to estimate the time complexity of \cref{alg:total-decomposition} in parallel as 
\begin{equation}
O(\bar{m}^2V_i),
    \label{eq:output-complexity}
\end{equation}
 where we define the {\em output-dependent} parameter $V_i = \sum_s r_s$. \\

Indeed, within the inner for-loop, for each $s=0, \dots, n-i$:
\begin{itemize}
    \item a matrix multiplication is called for matrices of size $m_{i+1+s}\times m_{i+s}$ and $m_{i+s}\times r_s$;
    \item a column reduction (\texttt{ColumnReduction}) is called for a matrix of size\\ $m_{i+1+s} \times r_s$;
    \item a basis completion (\texttt{bca}), \cref{alg:basis-extension}, is called for a list of at most $ m_i-r_s$ vectors and a list of $\vert \text{newInds} \vert$ vectors.
\end{itemize}

The complexity of the calls to the \texttt{bca} subroutine depends on parameter $s$. The subroutine is performed in chunks as $s$ increases, but the sum of the $\vert \text{newInds} \vert$ eventually amounts to $k_i$. The worst case happens when $r_s$ decreases one by one as $s$ increases. In this case, the cost of \texttt{bca} for each $s$ amounts to that of reducing one column of length $m_i$ against a list of $r_i+j$ with $j=1, \dots, k_i-1$ (increasing with $s$).  
The total cost over the for-loop is therefore $O(m_ir_ik_i + m_ik_i^2)$, where parameters are related via $k_i = m_i - r_i $. 
By substitution one then obtains that the cost of the \texttt{bca} subroutine is bounded by $O(m_i^3)$.
% can be expressed independently from $s$. Indeed,  As the interval generators remain linearly independent (\cref{lem:injection-of-restriction}), the total cost of \texttt{bca} for the whole for-loop amounts to that of reducing a list of $k_i$ vectors against a list of $r_i$ vectors, each vector being of size $m_i$. 
% The total cost is therefore $O(m_ir_ik_i)$, where parameters are related via $k_i = m_i - r_i$. By substitution one obtains $m_i^2 r_i - m_ir_i^2$, hence the \texttt{bca} subroutine requires $O(m_i^2r_i)$ operations.

Let us now consider the other two steps. The cost of matrix multiplication between a $m_{i+1+s}\times m_{i+s}$ and a $m_{i+s}\times r_s$ matrix is $O(m_{i+1+s} \ m_{i+s} \ r_s)$. Let us consider the worst case $\bar{m}$ for all $m_i$'s. We get $O(\bar{m}^2r_s)$ for each step $s$. 
The cost of column reduction of a matrix of size $m_{i+1+s} \times r_s$ can also be bounded by $O(\bar{m}^2r_s)$.
Now, the parameter $V_i= \sum_s r_s$ is, intuitively, the ``volume'' of all bars born at step $M_i$ until their death, and we can express the total cost of matrix multiplication and column reduction by $O(\bar{m}^2V_i)$. This makes the contribution of \texttt{bca} negligible and hence provides the global cost of \Cref{alg:ssd-general}. \\

We remark for completeness that the parallel execution of the steps of \cref{alg:total-decomposition} may, in particular cases, be very unbalanced in terms of time complexity. We therefore also provide an input-dependent estimate of the worst-case time complexity of \cref{alg:total-decomposition}, in terms of parameters $m$ and $\bar{m}$. This is then compared to that of the graded SNF algorithm \cite{skraba+2013x} (described here as \cref{alg:graded-SNF} in \Cref{sec:algo-graded-normal}).

In the case of our parallel \cref{alg:total-decomposition}, we have $V_i\leq (n-i+1)\bar{m}$. The equality corresponds to a ``rectangular'' barcode where all interval modules started at step $i$ are non-trivial until step $i=n$. The single-step decomposition  attains its worst complexity when the rectangular barcode starts at the first step $i=1$.
In that case, we can change the output-dependent estimate $O(\bar{m}^2V_1)$ into $O(\bar{m}^3n)$.
Observe that parameter $m$, expressing the sum of all $m_i$'s, is now equal to $\bar{m} n$. Hence, we obtain the {\em input-dependent} estimate for the most unbalanced case:
\begin{equation}
O(\bar{m}^2 m).
    \label{eq:input-complexity}
\end{equation}

Let us now focus on the graded SNF algorithm (\Cref{alg:graded-SNF} in \Cref{sec:gradedSmith}, when applied to presentation matrices whose block structure is described in \Cref{sec:from-pers-module-to-matrix}); that is, when we assume the input matrix $S$ to be of size $m\times m$, subdivided into $n$ blocks of size $ m_i \times (m_i+m_{i+1})$ with generators (rows) of degree $i$, and relations (columns) of degree $i$ and $i+1$.
In principle, storing an $m \times m$ matrix incurs a space cost of $O(m^2)$ to represent the input, which is significantly higher than our parallel approach $O(\bar{m}^2)$. However, a sparse implementation of matrix $S$ can make the two space complexities equivalent.
As for the time complexity, the classical estimate for SNF reduction is $O(m^3)$. This can theoretically be reduced via optimized algorithms to $O(m^{\omega})$ (see \cite{Storjohann1996}), where $2\leq \omega <3$ is the lower bound for the complexity of matrix multiplication.
% Here, we show that our assumption (as described in \Cref{sec:from-pers-module-to-matrix}) of the subdivision into blocks of the matrix $S$ implies a lower number of actual operations. 
% Indeed, for each of the $m$ processed columns (see~\Cref{alg:graded-SNF} in~\cref{sec:algo-graded-normal}) the procedure performs, in principle, at most $m$ row reductions. However, in $S$, the non-trivial entries for each column of degree $i$ straddle at most two blocks of degree $i$ and $i+1$.
% The row manipulation could in principle produce many new non-trivial entries of row degree $i$ and column degree $i+2$, hence outside the initial blocks.
% One can check that this does not happen for pivot of the same row and column degree.
% Instead, this happens when a pivot $p$ of row degree $i+1$ is found within a column of degree $i+1$.
% However, by construction of $S$, the reduction performed to get the pivot $p$ would erase all entries in the same row and hence belonging to columns to be processed and would produce a pivot within the block of row degree $i$ and column degree $i+2$.
However, this can also be significantly reduced by taking into account the block structure of $S$ in our particular case. 
Indeed, for each of the $m$ processed columns (see \Cref{alg:graded-SNF} in \Cref{sec:algo-graded-normal}) 
the procedure performs theoretically at most $m$ row reductions. However, in the input matrix the non-trivial entries for each column of degree $i$ straddle at most two blocks of row degree $i-1$ and $i$.
For each column of degree $i$, the row operations performed previously
% the internal for-loop on $j$ indexes of \Cref{alg:graded-SNF}
can produce non-trivial entries from degree $1$ to $i$, thus breaking the block structure vertically. 
However, the number of non-trivial entries in the column is still $O(\bar{m})$ since 
% the processed column has non-trivial entries of degree lower than or equal to $i$ and 
$m_0+\dots+m_{i-1}$ pivots have already been found, and their rows been reduced to zero. These rows can lie no lower than those of degree $i$, leaving room for no more than $O(\bar{m})$ non-trivial entries in the column. 
%
% a non-trivial entry of degree $i-2$ implies the existence of an already processed pivotal row of degree $i-1$ whose non-pivotal entries have been set to zero by the column operations. This pivotal row is unique due to the particular diagonal blocks in the input $S$ (see ochre blocks in~\cref{fig:matrixColor}) and not simply due to its block subdivision.
%
Hence, the first inner for-loop (on index $j$ in \Cref{alg:graded-SNF}) performs $O(\bar{m})$ operations.
The second inner for-loop (on index $c$ in \Cref{alg:graded-SNF}) simply sets to zero the $O(\bar{m})$ non-trivial entries of the row, thus its cost is negligible.
This means that the cost of the algorithm on each block $m_i \times (m_i+m_{i+1})$ is $O(\bar{m}^3)$, hence the overall cost is $O(n\bar{m}^3)$.
By substituting $m=\bar{m}n$, we conclude that the worst-case cost of the parallel approach in \labelcref{eq:input-complexity} is the same as the cost of the graded Smith normal form specialized to the block structure of a persistence module presentation.

\section{Persistent homology modules} \label{sec:PersistentHomology}

In this section, we provide a parallel method to obtain a persistence module by applying the $\tth{k}$-homology functor to an equi-oriented finite sequence of chain complex maps that are not necessarily injective.
First, we fix the notation.
Then, we show a construction of the $\tth{k}$-persistent homology module by homology representatives, then by harmonic representatives obtained through the combinatorial Laplacian operator. 
For the general homology representative case, our approach is a simple adaptation of known algorithms independently acting on each step in the input sequence of chain complexes.
Here, we state the desired properties to be fulfilled by the chosen method.

We aim to underline the level of generality of our approach and to exemplify the possibility of being adaptable to a large variety of special homology representatives. 
We observe that the parallel approach may lead to unnecessary computation repetitions in the case of injective chain maps, namely in the case of persistent homology. 
The case of the harmonics instead admits a more efficient construction of the module via simple matrix multiplications thanks to our proof of~\cref{thm:iso-hodge-hom}. \\

% def chain complex
A {\em chain complex} with coefficients in $\mathbb{F}$ is a sequence $C=(C_\bullet,\partial_\bullet)$ of $\mathbb{F}$-vector spaces connected by linear maps with $k\in\mathbb{N}$

\[ \dots \overset{\partial_{k+2}}{\longrightarrow} C_{k+1}\overset{\partial_{k+1}}{\longrightarrow}  C_k \overset{\partial_{k}}{\longrightarrow} C_{k-1} \overset{\partial_{k-1}}{\longrightarrow} \dots \overset{\partial_{2}}{\longrightarrow} C_1 \overset{\partial_{1}}{\longrightarrow} C_0 \overset{\partial_0}{\longrightarrow} 0, \]

\noindent
such that $\partial_{k+1}\partial_k=0$ for all $k\in\mathbb{N}$.
Each vector space $C_k$ is called the space of $k${\em -chains}.
The subspace $Z_k=\ker(\partial_k)$ is called  the space of $k${\em -cycles}.
The subspace $B_k=\im(\partial_{k+1})$ is called  the space of $k${\em -boundaries}. The condition $\partial_{k+1}\partial_k=0$ ensures that $B_k\subseteq Z_k$, for all $ k\in\mathbb{N}$. The quotient space $H_k=Z_k/B_k$ is the $k${\em -homology space}. 
A {\em chain map} $f : (C_\bullet,\partial^C_\bullet) \longrightarrow (D_\bullet,\partial^D_\bullet)$ is a collection of linear maps $f_k:C_k \longrightarrow D_k$ such that $f_k\partial^C_{k+1} = \partial^D_{k+1}f_{k+1}, \mbox{ for all } k\in\mathbb{N}.$

A chain map induces linear maps $\widetilde{f}_k: H^C_k \longrightarrow H^D_k$, for all $k$, and it can be shown that this implies that the $H_k$ are indeed functors from the category of chain complexes and chain maps to the category of vector spaces over $\F$ and linear mappings (see \cite{Hatcher2002book} for a complete account of these facts).

To compute the matrix associated with the map $\widetilde{f}_k: H^C_k \longrightarrow H^D_k$, we assume to have a basis $\{h^C_1,\dots,h^C_{\beta^C_k}, b^C_1, \dots, b^C_q\}$ of $Z_k(C)$, where $\{b^C_1, \dots, b^C_q\}$ is a basis of $B_k(C)$, and basis $\{h^D_1,\dots,h^D_{\beta^D_k}, b^D_1, \dots, b^D_r\}$ of $Z_k(D)$, such that  
$\{b^D_1,\dots,b^D_r\}$ is a basis of $B_k(D)$. Such basis can be found applying \cref{alg:homology} in ~\cref{subsec:homology-algorithms}.
Then, for each $s=1, \dots, \beta_k^C$, the following linear system in the variables $\lambda_1^s, \dots, \lambda_{\beta_k^D}^s, \mu_1^s, \dots, \mu_r^s$ must be solved:
\begin{equation}
f_k(h_s^C) = \sum_{j=1}^{\beta_k^D}\lambda_j^s h_j^D + \sum_{l=1}^r\mu_l^s b_l,
    \label{eq:linear-systems-structure-maps}
\end{equation}
defining the matrix with columns $(\lambda^s_1,\dots, \lambda^s_{\beta_k^D})^T$, with  $s=1, \dots, \beta_k^C$, as the matrix of $\widetilde{f}_k$ with respect to the basis induced by the projection of $\{h^C_1,\dots,h^C_{\beta^C_k}\}$ and $\{h^D_1,\dots,h^D_{\beta^D_k}\}$ to their respective homology space (see \cref{alg:general-induced-map} in~\cref{subsec:homology-algorithms}).

The application of the functor $H_k$ to a given sequence of complexes and chain maps 
\begin{equation}
\xymatrix{
\, C_{\bullet}^1 \ar[r]^-{f^1} & \dots \ar[r] & \dots \ar[r]^-{f^{i-1}} & C_{\bullet}^i \ar[r]^-{f^{i}} & \dots \ar[r]^-{f^{n-1}} & C_{\bullet}^n
}
\label{def:chain-filtration}
\end{equation}
provides a persistence module $\{(H_k^i,\widetilde{f}_{k}^i)\}_{i=0}^n$ for all $k\geq 0$. This persistence module is called the $\tth{k}$-persistent homology of the sequence of chain complexes \labelcref{def:chain-filtration}, see \cite{edelsbrunner2008survey}.

\subsection{Parallel construction of the persistent homology module via harmonics}
\label{subsec:harmonics}

In this section, we describe a parallel construction of the persistence module $\{(\mathcal{H}_k^i,\hat{f}_i)\}_{i\in\mathbb{N}}$ where $\mathcal{H}_k^i$ is the space of $k${\em -harmonics} at step $i$ and coefficients are taken in $\mathbb{R}$.
We call the persistence module $\{(\mathcal{H}_k^i,\hat{f}_i)\}_{i\in\mathbb{N}}$ the  {\em $\tth{k}$-harmonic persistence module}.

After some preliminaries on the Hodge Laplacian operator, through the Hodge decomposition (\cref{thm:hodge}, \cref{thm:hodge-map}),
for each index $i\in\mathbb{N}$, we show that there exists a structure map $\hat{f}_i$ induced by $f_k$, such that the $\tth{k}$-persistent homology module $\{(H_k^i,\widetilde{f}_i)\}$ and the $\tth{k}$-harmonic persistence module $\{(\mathcal{H}^i_k,\hat{f}_i)\}$ are isomorphic. We then provide \cref{alg:laplacian-induced-map} to compute these maps. 

\subsubsection*{The Hodge Laplacian}
In this section, we fix $\mathbb{F} = \mathbb{R}$. Given a chain complex $(C_\bullet,\partial^C_\bullet)$, we choose an inner product  $\langle \cdot , \cdot \rangle_k$ on each space of $C_k$ so that we have a well-defined adjoint of $\partial_k$, i.e. the map $\partial_k^*:C_{k-1}\rightarrow C_k$ such that $ \langle \partial_{k}(c),d \rangle_{k-1} = \langle c,\partial^*_{k}(d) \rangle_k $, for all $c\in C_{k}, d\in C_{k-1}$.

For $k\in\mathbb{N}$, the {\em Hodge Laplacian} in degree $k$  (Laplacian, for short) is the linear map on $k$-chains $L_k:C_k \longrightarrow C_k$ given by

\begin{equation}
    L_k := \partial_{k+1} \partial^*_{k+1} + \partial^*_{k} \partial_{k}.
    \label{def:laplacian}
\end{equation}

The space of $k${-harmonics} of a chain complex is the subspace of $C_k$

\begin{equation}
    \mathcal{H}_k := \ker(L_k).
    \label{def:harmonics}
\end{equation}

We refer to~\cite{Horak+2013} for more details and
we recall the following theorems, see Section 5.1 of \cite{Lim2015HodgeGraphs}.
\begin{theorem}\label{thm:hodge}

For a chain complex $C$ and for every natural $k$, 
    \[C_k = \mathcal{H}_k \oplus \im (\partial_{k+1}) \oplus \im (\partial_k^*)\]
    \noindent
    Moreover, this decomposition is orthogonal and $Z_k = \mathcal{H}_k \oplus \im(\partial_{k+1})$.
\end{theorem}

\begin{theorem}\label{thm:hodge-map}
The linear map $\psi_k: \mathcal{H}_k \longrightarrow H_k$ defined by $\psi_k(h)=[h]$ is an isomorphism, where $[h]$ is the homology class induced by the cycle $h$.

\end{theorem}

An obstacle to the persistence of the harmonic space is the following:
\begin{remark}
    A chain map $f:C\longrightarrow D$ does not restrict to a map between the harmonic subspaces $\mathcal{H}_k^C$ and $\mathcal{H}_k^D$.
    \label{rem:harmonics-not functorial}
\end{remark}
Indeed, given an element $h\in\mathcal{H}_k^C$, the $k$-cycle $f(h)$ is not necessarily in $\mathcal{H}_k^D$. 
More precisely, $f(h)$ is necessarily a $k$-cycle but not necessarily a $k$-cocycle.

However, given a sequence of chain complexes and chain maps as in \labelcref{def:chain-filtration}, we want to construct a persistence module, isomorphic to the persistent homology module of the sequence, given by the harmonic spaces of the chain complexes, with maps induced by the chain maps. 
The following theorem is sufficient to provide such a persistence module.

\begin{theorem}
    For any chain map $f:C\longrightarrow D$ and any $k \in \mathbb{N}$, the following diagram commutes

\[
\begin{tikzcd}
\mathcal{H}^C_k\arrow[r, "\hat{f}_k"]\arrow[d, "\psi_k^C"] & \mathcal{H}^D_k\arrow[d, "\psi_k^{D}"]\\
H^C_k \arrow[r, "\widetilde{f}_k"] & H^D_k.                         
\end{tikzcd}
\]
where $\hat{f}_k = \pi_k^Df_k i_k^C$, $i_k^C$ is the natural inclusion of $\mathcal{H}^C_k$ into $C_k$ and $\pi_k^D$ is the orthogonal projection of $D_k$ onto $\mathcal{H}^D_k$.
\label{thm:iso-hodge-hom}
\end{theorem}

\begin{proof}
For any $h\in \mathcal{H}_k^C$, we can see that $\widetilde{f}_k(\psi_k^C(h)) = \psi_k^D(\hat{f}_k(h))$. In fact, by the definition of $\widetilde{f}$ and $\hat{f}$, it holds $\widetilde{f}_k(\psi_k^C(h)) = \widetilde{f}_k([h]_C) = \left[f_k(h)\right]_D$ and $\psi_k^D(\hat{f}_k(h)) = \left[\pi_k^D(f_k(h))\right]_D$. Since $f_k(h)$ is a cycle in $D_k$ and because of the decomposition in \cref{thm:hodge}, there is a boundary $b$ of $D_k$ such that $f_k(h) = \pi_k^D(f_k(h)) + b$, hence $\left[f_k(h)\right]_D = \left[\pi_k^D(f_k(h))\right]_D$ and the diagram commutes.
\end{proof}

The $\hat{f}_k$ matrix can then be easily computed with \cref{alg:laplacian-induced-map}.

\begin{algorithm}
\SetAlgoLined
\textbf{Input}: Chain map $f_k:C_k(C)\to C_k(D)$, $\{v_1^C, \dots, v_n^C\}$ orthonormal basis of $\mathcal{H}_k^C$, $\{w_1^D, \dots, w_m^D\}$ orthonormal basis of $\mathcal{H}_k^D$ \; 
\KwResult{Matrix $\Phi$ representing $\hat{f}_k:\mathcal{H}_k(C)\to \mathcal{H}_k(D)$} 
$V_C:=$ matrix with columns $v_1^C, \dots, v_n^C$\;
$V_D:=$ matrix with columns $w_1^D, \dots, w_m^D$\;
$\Phi = V_D^T\cdot f_k \cdot V_C$\; 
\Return{ $(\Phi_i)$}
 \caption{Induced map between Laplacian kernels}
 \label{alg:laplacian-induced-map}
\end{algorithm}

\subsection{Simplicial complex chains}
\label{subsec:simplicial}

An {\em (abstract) simplicial complex} $\Sigma$ on a finite set $V$ is a subset of the power set of $V$, with the property of being closed under restriction. An element of $\Sigma$ is called a {\em simplex} and if $\sigma \in \Sigma, \ \tau \subseteq \sigma$ then $\tau \in \Sigma$. Elements of $V$ are usually called {\em vertices}. Simplices of cardinality $k+1$ are called $k$-simplices. We also say a $k$-simplex has {\em dimension} $k$. We call the $k$-skeleton of $\Sigma$ the set of simplices of $\Sigma$ of dimension $\leq k$, denoted $\Sigma_k$. If $\tau \subseteq \sigma$ we say that $\tau$ is a face of $\sigma$ and $\sigma$ is a coface of $\tau$. The dimension of a simplicial complex is defined as $\dim \Sigma := \max \{ \dim \sigma \ | \ \sigma \in \Sigma \}$.
By numbering the vertices in $V$, we define a {\em positvely oriented} $k$-simplex $\sigma = [v_0,\dots, v_k]$  as the class of tuples $( v_{p(0)},\dots, v_{p(k)} )$ with $p$ an even permutation. All remaining permutations give the {\em negatively oriented } simplex $\sigma$.

It is possible to specialize the chain complex construction of \cref{sec:PersistentHomology} to the case of a simplicial complex 
$\Sigma$, with coefficients in a field $\F$.
We obtain a chain complex $(C_\bullet,\partial_\bullet)$ by defining, for each $k$, $C_k = C_k(\Sigma)$, where $C_k(\Sigma)$ is the space of $k${\em -simplicial chains} consisting of finite $\F$-linear combinations of the oriented $k$-simplices of $\Sigma$ and such that $-\sigma$ coincides with the opposite orientation on $\sigma$.
We define $\partial_k = \partial_k(\Sigma)$, where $\partial_k(\Sigma):C_k(\Sigma) \rightarrow C_{k-1}(\Sigma)$ is the {\em simplicial boundary map} defined on an element of the canonical basis $\sigma=[v_0, \dots, v_k]\in\Sigma_k$ by
$
\partial_k(\sigma)=\sum_{i=0}^k (-1)^i [v_0, \dots, \hat{v}_i, \dots, v_k],
$
where $\hat{v}_i$ means that vertex $v_i$ is omitted. It extends to the whole chain space by linearity. 

A {\em simplicial map} $s:\Sigma \rightarrow \Sigma'$ is the extension to $\Sigma$ of a vertex map $\tilde{s} : \Sigma_0 \rightarrow \Sigma'_0$, given by $s(\sigma) = [ \tilde{s}(v_0) , \dots, \tilde{s}(v_k) ]$ whenever $\sigma = [v_0, \dots, v_k] \in \Sigma$.
Denote by $(C_\bullet,\partial^C_\bullet)$ the simplicial chain complex of $\Sigma$ and by $(D_\bullet,\partial^D_\bullet)$ the simplicial chain complex of $\Sigma'$.
Then the simplicial map $s$ induces a chain map $f : (C_\bullet,\partial^C_\bullet) \rightarrow (D_\bullet,\partial^D_\bullet)$ by setting $f_k(\sigma) = s(\sigma)$ for all $\sigma \in \Sigma_k$ if $\dim s(\sigma) = k$, and $0$ otherwise.

We can apply this section's persistent homology module constructions to any monotone sequence of simplicial maps. Hence, our parallel algorithm introduced in \cref{sec:Decomposition} specializes in the simplicial complex case.
In the following, we report some examples and discuss interesting points concerning the simplicial case.

\begin{figure}
    \centering
    \includegraphics[width = 0.5\textwidth]{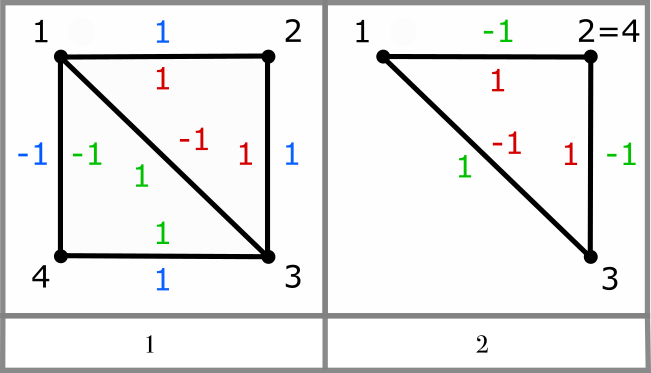}
    \caption{An example of a vertex collapse inducing a chain map $f=(f_k)$, where step 2 is obtained from step 1 by identifying vertices $2$ and $4$. Coefficients of $1$-chains of possible degree-1 homology representatives are depicted with the same color. The figure shows, at step 1 in red $z_1 = [1,2] + [2,3] - [1,3] $, in green $z_2 = [1,3] + [3,4] - [1,4]$, in blue $z = [1,2] + [2,3] + [3,4] - [1,4]$; at step 2 in red $\omega_1 = [1,2] + [2,3] - [1,3]$, in green $\omega_2 = [1,3] - [2,3] - [1,2]$. The $1$-component $f_1$ of the chain map $f$ sends $z_1$ to $\omega_1$, and $z_2$ to $\omega_2$. Hence, the image of $z=z_1+z_2$ is trivial. 
    }
    \label{fig:motivating_vertex_collapse}
\end{figure}

\begin{example}[Vertex collapse]
Consider the chain map $f$ induced by the vertex collapse in~\cref{fig:motivating_vertex_collapse}.
We apply the above construction to retrieving the associated 2-step persistent homology module in degree 1.
Depending on the chosen vertex labeling and optimization procedure, our parallel construction of the homology steps might return several choices of homology representatives.
For instance, according to the labeling of vertexes in black, the already mentioned left-to-right reduction in~\cite{Carlsson2005DCG} would return the red and green $1$-chains at step 1 and the red $1$-chain at step 2.
Furthermore, solving the linear systems in \cref{eq:linear-systems-structure-maps} yields matrix $\begin{pmatrix}
1, & -1
\end{pmatrix}$, representing the linear map $\widetilde{f}$, meaning that the green and red homology representatives from step 1 are mapped to the red homology representative of step 2 with the opposite sign. This concludes the construction of the desired 2-step persistence module. 
We observe that the red and green homology representatives do not form an interval basis since, at step 2, they are non-trivial and equivalent to one another.
The parallel decomposition previously introduced in~\cref{sec:Decomposition} can be applied to the obtained persistence module to get the interval basis formed by the red and blue homology representatives at step 1. This way, the blue representative captures the homology class (the sum of the red and green representatives) being born at step 1 and dying at step 2, and the red one captures the class being born at step 1 and still non-trivial at step 2.
    \label{eg:vertex-collapse}
\end{example}

\begin{figure}
    \centering
    \includegraphics[width = 0.5\textwidth]{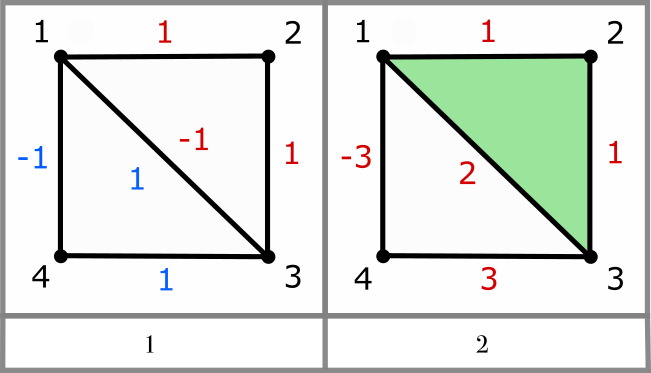}
    \caption{An example of tracking of harmonic homology representatives along the inclusion of simplicial complexes, obtained by inserting the $2$-simplex $[1,2,3]$. 
    Coefficients of $1$-chains forming harmonic homology representatives for the associated 2-step persistence module are depicted with the same color in each step. The figure shows, at step 1 in red $z_1 = [1,2] + [2,3] - [1,3] $, in blue $z_2 = [1,3] + [3,4] - [1,4]$; at step 2 in red $\omega = [1,2]+2 [1,3] -3 [1,4] + [2,3] +3 [3,4]$.
    The $1$-component $f_1$ of the chain map $f$ sends $z_1 + 3 z_2$ to $\omega$.
    }
    \label{fig:motivating_harmoincs_representatives}
\end{figure}

\begin{example}[Tracking harmonic homology representatives]
Consider the chain map $f$ induced by the inclusion in \cref{fig:motivating_harmoincs_representatives}.
We apply the construction above to compute harmonic homology representatives in the first degree for the two steps: red and blue on the left (step 1) and red on the right (step 2).
In step 1, we notice that the two obtained harmonics coincide with generic homology representatives. This is due to the absence of $2$-simplices.
However, harmonic representatives are not, in general, preserved by the inclusion of simplicial complexes.
Indeed, in step 2, the same homology representatives are no longer harmonic forms. 
By~\cref{alg:laplacian-induced-map}, we retrieve that the red $1$-chain $\omega$ at step 2 is a combination of the inclusions of the red $z_1$ and blue $z_2$ $1$-chains at step 1: $\omega = z_1 + 3 z_2$.
This concludes constructing the desired $2$-step persistent homology module via harmonic representatives.
We observe that $z_1$ and $z_2$ do not form an interval basis for the obtained persistence module.
Our parallel decomposition allows us to choose $z = z_2 - 3 z_1$ and $z_2$ as harmonic representatives at step 1 so that the inclusion directly maps $z$ to $\omega$ and $z_2$ to $0$ and each harmonic representative is kept independent from other representatives along the module steps.
    \label{eg:harmonics-inclusione}
\end{example}

\subsubsection{Filtered simplicial complexes}
\label{subsec:filtered}

A simplicial complex $\Sigma$ can be made into a {\em filtered simplicial complex} (or a {\em filtration of simplicial complexes}) by taking a finite sequence of subcomplexes $\Sigma^0\subseteq \Sigma^1 \subseteq \dots \subseteq \Sigma^n = \Sigma$, where a {\em subcomplex} is a subset and also a simplicial complex.
The inclusion maps in the filtration induce a monotone sequence of chain complexes with maps $\{f^i\}$ as in \labelcref{def:chain-filtration} where the induced chain maps are all injective.
We fix a dimension $k$.
We get that the filtered chain complex $C_k = \{ (C_k^i, f^i_k) \}_{i=1}^{n}$ is a persistence module.

\begin{remark}
The $\F[x]$-graded module $\alpha(C_k)$ associated with the persistence module of the filtered chain complex $C_k$ is free.
Moreover, an interval basis for $C_k$ consists, for each index $i$, of the $k$-simplices $\sigma$ in $\Sigma^i \setminus \Sigma^{i-1}$.
    \label{rem:chains-free-pers-module}
\end{remark}

The freeness of $\alpha(C_k)$ implies that also the graded modules associated with the persistence modules of the {\em filtered $k$-cycles}  $Z_k$ and the persistence modules of the {\em filtered $k$-boundaries} $B_k$ are free.
To this purpose, we underline the following observations concerning the left-to-right reduction $\partial = RV$~\cite{Carlsson2005DCG} of the boundary matrix $\partial$ which is at the heart of most of the persistent homology computations, where $R$ is the reduced matrix and $V$ keeps track of the operations performed on the columns along the reduction.

\begin{remark}
The collection $\mathscr{V}$ of the homology representatives in the columns of $V$ corresponding to the null columns in $R$ form an interval basis for $Z_k$.
The same collection $\mathscr{V}$ is a minimal system of generators of the persistent homology module $H_k$, which is not generally an interval basis. A presentation of $H_k$ requires further column reductions to find the combinations expressing a system of generators of $B_k$ in terms of $\mathscr{V}$.
An example of such a system of generators $\mathscr{V}$ is in \cref{fig:example_classical} in \cref{sec:intro}.
    \label{rem:to-reviewer-1}
\end{remark}

\begin{remark}
The collection $\mathscr{V}$ of the homology representatives in the non-trivial columns of $R$ form an interval basis for $B_k$.
If the graded module associated with the persistent homology module $H_k$ has no free part, the same collection $\mathscr{V}$ is also an interval basis for $H_k$.
Otherwise, the same collection $\mathscr{V}$ can be extended to an interval basis of $H_k$ by adding the elements of the interval basis of $Z_k$ of infinite order in the associated graded module (essential classes).
An example of such an interval basis is in \cref{fig:example_interval} in \cref{sec:intro}. The retrieval of this kind of homology representatives is implemented, for instance, in \cite{Bauer2016ripser}, by the clear optimization procedure~\cite{Bauer2014}.
    \label{rem:to-reviewer-2}
\end{remark}

Indeed, one can easily check that for $Z_k$; we get a minimal system of generators $\mathscr{V}_Z$ in the case of Remark \ref{rem:to-reviewer-1}.
By simply noticing that boundaries are cycles, the system $\mathscr{V}_B$ in the case Remark \ref{rem:to-reviewer-2} is a minimal system of generators of $Z_k$.
Hence, both $\mathscr{V}_Z$ and $\mathscr{V}_B$ are a minimal system of generators for $H_k$.

By freeness, the two systems form an interval basis of $Z_k$.
Moreover, $\mathscr{V}_B$ contains a minimal system of generators of $B_k$, and hence an interval basis for $B_k$.
Notice that the reduction provides a presentation matrix of $H_k$ with respect to generators in $\mathscr{V}_B$, whereas a presentation with respect to $\mathscr{V}_Z$ is less direct to retrieve.
For $\mathscr{V}_B$, the associated presentation matrix has one row per element $\mathscr{V}_B$ and one column per element of $\mathscr{V}_B$ belonging to the interval basis of $B_k$. Rows are graded as elements of $Z_k$. Columns are graded as elements of $B_K$.

The subset in $\mathscr{V}_B$, which forms the interval basis for $B_k$, defines the rows containing exactly a non-trivial element in correspondence with the column of the matched boundary.
Indeed, if a generator $v$ of degree $i$ in $\mathscr{V}_B$ is matched to $w$, a generator of degree $j\geq i$ of $B_k$, this implies that $x^{j-i} v = 0$. All other entries are trivial.
Hence, the presentation matrix with respect to $\mathscr{V}_B$ is in Smith normal form according to Definition \ref{def:snf-matrix} and hence $\mathscr{V}_B$ is an interval basis of $H_k$. 
For $\mathscr{V}_Z$, if a generator $v$ of degree $i$ in $\mathscr{V}_Z$ is matched to $w$ a generator of degree $j\geq i$ of $B_k$, this does not imply that $x^{j-i} v = 0$ since there could be another element $v'$ of degree $h\leq i$ such that $x^{j-i} v = x^{j-h} v'\neq 0$. 

\section{Conclusions and future works}
\label{sec:conclusions}

In this work, we have described an interval basis of a persistence module as a particular minimal system of generators.
We have introduced \cref{alg:total-decomposition} as a distributed approach for retrieving an interval basis. Our approach applies to any persistence module as defined in~\cref{sec:Background}, not necessarily coming from the homology of a filtered chain complex.
A specialization for real coefficients based on the SVD decomposition is also available in \cref{subsec:real}. 

In \cref{sec:complexity}, we have discussed the computational advantage of our parallel approach, quantified through an output-dependent estimate, including a focus on the unbalanced barcode case (the worst case), with all bars appearing at the same step, especially at the very first step, and lasting for along the entire persistence module. We have discussed the complexity of the graded Smith normal form reduction of persistence module presentation matrices, showing that the complexity of the unbalanced parallel case matches the cost of the specialized SNF reduction. \\

In TDA applications, often dealing with persistence modules obtained through the homology functor in degree $0$ of a geometric filtration (e.g. Vietoris-Rips, Alpha, etc), such an unbalanced configuration may arise, in that all degree-zero bars appear at the first step.
Even though one typically has that most of the homology classes soon disappear along the filtration, it is fair to highlight that our parallel approach would have better performances in homology degrees other than $0$, due to the higher sparsity of the obtained barcode. Furthermore, our approach could be more advantageous in more general TDA frameworks, such as in the case of non-injective simplicial chain maps, or the case of harmonic forms treated in \cref{subsec:harmonics}. \\

We have described how to obtain a persistent homology module out of a monotone sequence of chain complexes, remarking that each step and structure map in the module can be obtained independently, thus making it suitable for parallel approaches.
Such an integration has offered interesting insights to be investigated further. For instance, it has made it possible to geometrically locate the interval basis vectors onto a filtered simplicial complex.
We have discussed simple examples to make comparisons with two possible kinds of homology representatives obtained through the reduction algorithm \cite{Carlsson2005DCG}, and discussed which kind of homology representatives do satisfy the interval basis definition in Remark \ref{rem:to-reviewer-1} and Remark \ref{rem:to-reviewer-2}.

We believe that, for a monotone sequence of chain maps, the interval basis's descriptive power deserves further inquiry, since it encodes implicitly the relations among evolving homology classes. Possible future directions on the descriptive power of interval bases include adapting our parallel approach to retrieving interval bases of submodules. This might, in our opinion, apply to the challenge of defining interval matchings induced by persistence module morphisms, to be compared with the ones already available in the literature.
Other directions may include the study of interval bases applied to multiparameter persistent homology and to non-injective families of simplicial complexes.

As a last point, we have seen how working at the persistence module level might be favorable for dealing with the persistence of harmonics. In particular, we have shown how to overcome the issues expressed in \cref{rem:harmonics-not functorial} in the tracking of harmonic representatives. That case can benefit from the SVD factorization for real coefficients to lower the computational complexity (\cref{subsec:real}). From the geometrical point of view, we have shown how the interval basis choice for generators applies to the harmonic case. Hence, our work contributes to combining harmonic generators into the persistent homology framework.

%%% acknowledgements
%\begin{acknowledgements}
\paragraph{Acknowledgements}
The authors acknowledge the SmartData@PoliTO Center for Big Data and Machine Learning support. This study was carried out within the FAIR - Future Artificial Intelligence Research and received funding from the European Union Next-GenerationEU (PIANO \linebreak NAZIONALE DI RIPRESA E RESILIENZA (PNRR) – MISSIONE 4 COMPONENTE 2, INVESTIMENTO 1.3 – D.D. 1555 11/10/2022, PE00000013). 
SS is a member of the Gruppo Nazionale per le Strutture Algebriche, Geometriche e le loro Applicazioni (GNSAGA) of the Istituto Nazionale di Alta Matematica (INdAM) and partially supported by the MIUR Excellence Department Project awarded to the Department of Mathematics, University of Rome Tor Vergata, MatMod@TOV 2023-2027.
This manuscript reflects only the authors’ views and opinions; neither the European Union nor the European Commission can be considered responsible for them. 
%\end{acknowledgements}

% Authors must disclose all relationships or interests that 
% could have direct or potential influence or impart bias on 
% the work: 
%
 \section*{Conflict of interest}

 The authors declare that they have no conflict of interest. 
% BibTeX users please use one of
%\bibliographystyle{spbasic}      % basic style, author-year citations
%\bibliographystyle{spmpsci}      % mathematics and physical sciences
%\bibliographystyle{spphys}       % APS-like style for physics
\newpage
\bibliographystyle{plain} % We choose the "plain" reference style
\bibliography{references.bib}   % name your BibTeX data base

%%%% BIBLIO 
%\newpage
%
%%\printbibliography
%
%\pagebreak
%\newpage

%%%% APPENDIX
\newpage
\appendix

\section{Appendix}
\subsection*{Complements to~\Cref{sec:Decomposition}  when $\F=\R$}
\label{subsec:real}

If we use the field $\R$ in the persistence module, we can specialize in decomposing the space described in the previous paragraph. 
We will use the following notation: given a matrix $A$ with $m$ rows and $n$ columns, $A[:,i]$ denotes the $\tth{i}$ column of the matrix, whereas $A[:,:i]$ denotes the submatrix given by the first $i$ columns of $A$. The same notation is used for the first arguments in parenthesis to represent operations on rows. We will make use of this simple result in linear algebra.
\begin{lemma}
\label{lem:restriction-to-orthogonal}
Given three vector spaces $V_1,V_2$, and $V_3$ over $\R$ and two linear maps $\psi_1:V_1\to V_2$ and $\psi_2:V_2\to V_3$ it holds
\begin{equation*}
    \ker( \psi_2\circ \psi_1) = \ker( \psi_1) \oplus \ker \left(\psi_2\circ \psi_1|_{\left(\ker(\psi_1)\right)^\bot}\right).
\end{equation*}
\end{lemma}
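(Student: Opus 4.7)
The plan is to exhibit the decomposition directly by using the orthogonal splitting $V_1 = \ker(\psi_1) \oplus \ker(\psi_1)^\bot$ available over $\R$ (equipped with an inner product), and then verifying the two inclusions required to identify this splitting with the claimed decomposition of $\ker(\psi_2\circ\psi_1)$.

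First I would observe the two easy inclusions: $\ker(\psi_1)\subseteq \ker(\psi_2\circ\psi_1)$ trivially, since $\psi_1$ annihilates $\ker(\psi_1)$; and $\ker\bigl(\psi_2\circ\psi_1|_{\ker(\psi_1)^\bot}\bigr)\subseteq \ker(\psi_2\circ\psi_1)$ by definition of restriction. Next, the intersection of the two summands is trivial because $\ker(\psi_1)\cap \ker(\psi_1)^\bot = \{0\}$ from the orthogonal decomposition of $V_1$, and in particular
\[
\ker(\psi_1)\cap \ker\bigl(\psi_2\circ\psi_1|_{\ker(\psi_1)^\bot}\bigr)\subseteq \ker(\psi_1)\cap \ker(\psi_1)^\bot=\{0\}.
\]
These two facts already show that the right-hand side is a well-defined internal direct sum contained in $\ker(\psi_2\circ\psi_1)$.

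For the remaining inclusion, given $v\in \ker(\psi_2\circ\psi_1)$ I would use orthogonality to write uniquely $v = v_\parallel + v_\perp$ with $v_\parallel\in \ker(\psi_1)$ and $v_\perp \in \ker(\psi_1)^\bot$. Applying $\psi_2\circ\psi_1$ one gets $0 = \psi_2(\psi_1(v_\parallel)) + \psi_2(\psi_1(v_\perp)) = \psi_2(\psi_1(v_\perp))$, since $v_\parallel\in\ker(\psi_1)$. Hence $v_\perp \in \ker\bigl(\psi_2\circ\psi_1|_{\ker(\psi_1)^\bot}\bigr)$, and $v$ belongs to the direct sum on the right-hand side.

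There is no real obstacle here, as the entire argument reduces to combining the orthogonal decomposition of $V_1$ with elementary kernel manipulations; the only subtlety is being explicit that the statement implicitly requires an inner product on $V_1$ (to make sense of $\ker(\psi_1)^\bot$), which is indeed the working assumption of the Euclidean setting invoked in this section.
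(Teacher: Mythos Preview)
Your proof is correct and follows essentially the same approach as the paper: decompose an arbitrary element of $\ker(\psi_2\circ\psi_1)$ along the orthogonal splitting $V_1=\ker(\psi_1)\oplus\ker(\psi_1)^\bot$ and observe that the orthogonal component lands in the restricted kernel. Your version is slightly more explicit in spelling out both inclusions and the triviality of the intersection, whereas the paper's proof states only the nontrivial inclusion and leaves the rest implicit.
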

\begin{proof}
Let $x$ be an element of $\ker (\psi_2\circ \psi_1)$. It can be written uniquely as $x = v+w$, with $v\in \ker (\psi_1)$ and $w\in (\ker(\psi_1))^\bot$. Since $(\psi_2\circ\psi_1)(v+w)=0$ and $v\in \ker(\psi_1)$, it must be $\psi_2(\psi_1(w))=0$, therefore $w\in \ker (\psi_2\circ\psi_1)$. Then, $w$ belongs to $\ker \left(\psi_2\circ \psi_1|_{(\ker\psi_1)^\bot}\right)$ and the statement follows.
\end{proof}

Fix $M_0$, and suppose that $\varphi_n = 0$. 
For each $M_i$, denote with $d_i$ the number $\dim M_i$. Consider $\varphi_0$ and decompose it via the SVD decomposition in $\varphi_0 = U_0S_0V_0^T$. If $r_0 =\rank \varphi_0$, then $k_0 = d_0 - r_0$ is the dimension of $\ker \varphi_0$. Notice that $S_0$ is a matrix $d_1\times d_0$ with non-zero elements only on the first $r_0$ positions on the main diagonal. Therefore, if $e_i$ is the $\tth{i}$ element of the canonical basis of $\R^{d_0}$, with $r_0 < i\leq d_0$, then $\varphi_0V_0e_i= U_0S_0e_i = 0$. Then, a basis of $\ker \varphi_0$ is given by the vectors $\{V_0e_{r_0+1},\dots,V_0e_{d_0}\}$. The index function $J$ attains the value $1$ on all of them. All such vectors will also be in the kernel of the maps $\varphi_{0,j}$ for all $j > 0$. To avoid repetitions, only the restriction of each $\varphi_{0,j}$ on the orthogonal complement of $\ker \varphi_0$ will be considered. This operation will not change the result because of Lemma \ref{lem:restriction-to-orthogonal}.
To do so, consider the map $\tilde{\varphi}_0 = U_0\tilde{S}_0$, where $\tilde{S}_0 = S_0[:,:r_0]$, given by the first $r_0$ columns of $S_0$. Repeating the same process, we will consider $m_1 = \varphi_1\tilde{\varphi}_0$ instead of $\varphi_{0,2}$. Call $d_1 = d_0 - k_0$. Decompose again $m_1 = U_1S_1V_1^T$ and call $r_1 = \rank m_1$ and $k_1 = d_1 - r_1 = \dim \ker m_1$. Again, a basis of $\ker m_1$ is given by the vectors $V_1e_{r_1+1},\dots, V_1e_{d_1}$. Recall that these vectors are expressed in the basis $\{V_0[:,1],\dots, V_0[:,r_0]\}$ of $\ker\varphi_0^\bot$. To return them in the canonical basis of $M_0$ it is sufficient to consider the matrix $\eta_0$ with $d_0$ rows and $r_0$ columns such that $\eta_0[i,j]$ is equal to 1 if $1\leq i=j \leq r_0$ and 0 otherwise. Then, the vectors in the canonical basis of $M_0$ are $\{V_0\eta_0V_1e_{r_1+1},\dots, V_0\eta_0V_1e_{d_1}\}$. In this case, the index function value for these vectors will be 2. For the general step $j$, consider $m_j = \varphi_j\tilde{m}_{j-1} = U_jS_jV_j^T$. The adapted basis of $M_0$ will be updated with the vectors
\begin{equation}
     V_{0}\eta_{0} \dots V_{j-1}\eta_{j-1} V_je_x,\quad r_j+1\leq x\leq d_j,
\end{equation}
and it will be $J(V_{0}\eta_{0} \dots V_{j-1}\eta_{j-1} V_je_x) = j+1$ for every $r_j+1\leq x\leq d_j$.
%\newpage
Once all the vectors are obtained, as in the general case, it is necessary to complete a basis of $\im(\varphi_{i-1})$ to a basis of $M_0$, introducing the vectors in $\mathcal{V}$ in ascending order given by the function $J$. The resulting vectors will be part of the interval basis. \\
The procedure is encoded in \cref{alg:ssd-real}, which makes use of the matrix decomposition routine \cref{alg:matrix-dec} and specializes \cref{alg:ssd-general} to the case of real coefficients. We denote it by \texttt{ssdR}$(M_i)$. Then, the full decomposition of \cref{alg:total-decomposition} can be specialized to the reals by replacing \texttt{ssd}$(M_i)$ with \texttt{ssdR}$(M_i)$.

\begin{algorithm}[!]
\SetAlgoLined
\textbf{Input}: matrix $A$\;
\KwResult{Restriction of $A$ on the space orthogonal to its kernel with respect to a basis $V$ of the domain, $V$ matrix whose columns are a basis of the domain of $A$, $\dim (\ker A)^\bot$ ,$\dim \ker A$}
$U,S,V = \svd(A)$\;
$nz = \rank S$ \;
$d = $ number of columns of $A$ \;
$dk = d - nz$ \;
$R = US[:,:nz]$\; 
\Return{$R$, $V$, $nz$, $dk$}
 \caption{Matrix decomposition}
 \label{alg:matrix-dec}
\end{algorithm}

\begin{algorithm}%[H]
\SetAlgoLined
\textbf{Input}: map $\varphi_{i-1}:M_{i-1}\to M_i$, maps $\{\varphi_j:M_j\to M_{j+1})\}, i\leq j\leq N$ \;
\KwResult{Vectors $\mathcal{V}^i_\text{Birth}$}
$U,S,V \leftarrow \svd(\varphi_{i-1})$ \;
$r \leftarrow \operatorname{rank}(\varphi_{i-1})$\;
$\mathcal{U} \leftarrow U[:,:r]$ basis of the image of $\varphi_{i-1}$\;
$\mathcal{V}^i_\text{Birth} \leftarrow \{\}$; 
$lk \leftarrow 0$\;
$R = \operatorname{Id}:M_i\to M_i$\;
$d \leftarrow \dim M_i$ \; 
$V_{tot} \leftarrow I_{d}$\;
\For{$s=0,\dots,N-i$}{
    $R \leftarrow \varphi_{s+i+1}\cdot R$\;
    \eIf{number of rows of $R = 0$}{
    $k \leftarrow$ number of columns of $R$\;
    $V \leftarrow I_{k}$\;
    $nz \leftarrow 0$\;
    $dk \leftarrow k$\;
    }{
    $R, V, nz, dk \leftarrow \operatorname{dec}(R)$\;
    }
    $V_{temp} \leftarrow I_{d}$, $l \leftarrow \operatorname{ord}(V)$, $V_{temp}[:l,:l] \leftarrow V^t$\;
    $V_{tot} \leftarrow V_{tot}\cdot V_{temp}$\;
    \If{$dk>0$}{
    $\mathcal{T} \leftarrow \textrm{bca}(\mathcal{U}, V_{tot}[:,d-lk-dk :\ d-lk ])$\;
    $\mathcal{U} \leftarrow \mathcal{U} \cup V_{tot}[:,d-lk-dk :\ d-lk ]$\;
    $\mathcal{V}^i_\text{Birth} \leftarrow \mathcal{V}^i_\text{Birth}\cup \mathcal{T}$\;
    $J(t) \leftarrow s+1$ for all $t\in \mathcal{T}$\;
    $lk \leftarrow lk + dk$\;
    }
    \If{$nz = 0$ \textbf{or} $|\mathcal{V}|+r = d$}{\textbf{break}\;}
}
\Return{$\mathcal{V}^i_\text{Birth}$, $J$}
 \caption{single step decomposition on $\R$}
 \label{alg:ssd-real}
\end{algorithm}

\FloatBarrier
% --------
% graded Smith Normal Form with stack bases
\section{Appendix}
\label{sec:graded-modules}
\subsection{Graded modules}
\label{subsec:graded-modules}

This section introduces some notation for graded module presentations and their decomposition into cyclic modules, to link the notion of interval basis to the Smith normal form.
Since it is not obvious to find in the literature the adaptation of classical decomposition results (see \cite{Mines1988AAlgebra}) specialized to the graded case, we include proofs of these results.

In the following, we consider the polynomial ring $\F[x]$ endowed with the standard grading structure defined by the monomial decomposition of polynomials, where the degree $\deg x$ of the indeterminate $x$ is set to $1$.
This way, $\F[x]$ is seen as a direct sum of the $\F$-vector spaces $\F[x]_i$ containing monomials of degree $i$.
Moreover, the property $x^{j}F[x]_i\subseteq \F[x]_{i+j}$ holds.
A graded $\F[x]$-module is an $\F[x]$-module admitting a direct sum decomposition into $\F$-vector spaces, called homogeneous parts of degree $i$, such that the action of $x^j$ over each homogeneous element of degree $i$ gives a homogeneous element of degree $i+j$.
We denote by $\deg v$ the maximum degree of the homogeneous components of $v\in M$.
Clearly, $\F[x]$ can be seen as a graded $\F[x]$-module.

This allows us to make explicit the equivalence of categories $\alpha$ already mentioned in \cref{sec:intro}.

A persistence module $\mathcal{M}$ can be associated with a graded $\F[x]$-module $\alpha(\mathcal{M})$ under a well-known equivalence of categories \cite{corbet2018,Carlsson2005DCG}, in the following way: given $\mathcal{M}$ as above, $\alpha(\mathcal{M})$ is defined as $\bigoplus_{i\in\N}\alpha(M_i):=M_1\oplus M_2\cdots \oplus M_n\oplus M_n\oplus M_n\cdots$. The grading structure is obtained by setting $xv=\varphi_{i}(v)$, for each $i\in [n]$ and $v\in \alpha(M_i)=M_i$ and $xv=v$ for $v\in\alpha(M_j)=M_n$ for $j>n$.
The steps $M_i$ are the homogeneous part of degree $i$ of $\alpha(\mathcal{M})$.

Before proceeding, we introduce the shift notation $\F[x](-d)$ for the graded module $\F[x]$ with standard degrees shifted so that the constant polynomial $1$ has degree $d$. Moreover, we restrict to considering homogeneous homomorphisms with degree zero, i.e., preserving degrees.

\begin{definition}
 Let $M$ be a finitely generated graded $\F[x]$-module.
A {\em presentation} of $M$ is a choice of
\begin{itemize}
    \item a finite system of homogeneous {\em generators} $V=\{v_i\}_{i\in I}$ in $M$;
    \item a finite set of homogeneous equations, called {\em relations (or syzygies)}  $S=\{s_j\}_{j\in J}$ in $M$,
\end{itemize} 
such that the following sequence is exact. 

\begin{equation}
	\xymatrix{
\bigoplus_{j\in J} \F[x](-\deg s_j) \ar[r]^-{\sigma}	
&	\bigoplus_{i\in I} \F[x](-\deg v_i) \ar[r]^-{\epsilon}
&	 M \ar[r]^-{}
&    0,
}
\label{eq:presentation}
\end{equation}

\noindent
where the map  $\epsilon:\bigoplus_{i\in I} \F[x](-\deg v_i) \rightarrow M$ sends the $\tth{i}$-standard generator $e_i$ to  $v_i$,
and $\sigma: \bigoplus_{j\in J} \F[x](-\deg s_j) \rightarrow \bigoplus_{i\in I} \F[x](-\deg v_i)$ 
expresses the equations $s_j$ with respect to the standard basis $\{e_i\}_{i\in I}$.
% SARA
A presentation of $M$ is said to be {\em minimal} if and only the presentation has a minimal number of generators and relations.
    \label{def:presentation}
\end{definition}

In other words, the module $M$ is obtained as the cokernel of $\sigma$ or $\coker(S)$ where, with a small abuse of notation, matrix $S$ is set to have as column $j$ the coefficients of $s_j$.
In this case, we say that $S$ is a {\em presentation matrix} of $M$, and in the following, we will refer to a pair 
$(\{v_i\} , S)$ as a presentation of $M$.

\begin{definition}[Graded Smith Normal Form]
    A presentation matrix $S$ for some graded $\F[x]$-module $M$ 
    is in {\em graded Smith Normal Form} if and only if 
         each non-zero entry, called a pivot, is the unique non-zero entry in its row and column, and the pivot is equal to $x^p$ for some integer $p\geq 0$.
We will call
$\ones(S)$ the set of row indices in $S$ with pivots equal to $1$.
    \label{def:snf-matrix}
\end{definition}
\noindent

To link a graded Smith Normal Form presentation to an interval decomposition, we set the following notation for the {\em cyclic module generated} in $M$ by a homogeneous element $v$

\begin{equation}
    \F[x]v \subseteq M,
    \label{eq:cyclic-submodules}
\end{equation}

\noindent
and the order of the cyclic submodule is defined as the maximum exponent $p$ such that $x^{p-1}v \neq 0$, possibly infinite.

\begin{theorem}
Let $M$ be a graded $\F[x]$-module and $(\{v_i\}_{i\in I}, S)$ a presentation for $M$ with the notation of~\eqref{eq:presentation}.
The matrix $S$ is in graded Smith Normal Form
if only if
the module $M$ decomposes into cyclic submodules as

\[
	M \cong
    \bigoplus_{m=1}^N \F[x]v_{i_m},
	\]
	
\noindent
with $i_1, \dots, i_N$ the indexing obtained by restricting row indices to $I\setminus \ones(S)$. 
Moreover, if the cyclic submodule $\F[x]v_{i_m}$ is of order $p_{i_m}$, then it is isomorphic to $\F[x](-\deg v_{i_m})/(x^{p_{i_m}})$, otherwise $\F[x]v_{i_m}$ is isomorphic to $\F[x](-\deg v_{i_m})$.
	\label{thm:snf-interval-correspondence}
\end{theorem}
\begin{proof}
We reduce to the case $v_i=e_i$, that is $M$ equal to $\bigslant{F}{\im (\sigma)}$ where $F$ is freely generated by $\{e_i\}_{i\in I}$ since the standard homomorphism $e_i\mapsto v_i$  realizes the isomorphism to $M$. 
Clearly, the elements $v_i$, for $i\in I$, generate $M$ by definition of presentation.
Suppose that the presentation matrix $S$ is in graded Smith Normal Form.
Let $i_1, \dots, i_N$ be the indexing obtained by restricting row indices to $I\setminus \ones(S)$. 

The elements $v_{i_1}, \dots, v_{i_N}$ still generate $M$ since a row index $m\in \ones(S)$ implies that $v_{i_m}$ belongs to the image of $S$.

To prove that the sum is direct, notice that a null combination of $v_{i_1}, \dots, v_{i_N}$ belongs to the image of $S$.
Indeed, since $S$ is in graded Smith Normal Form, $\im(\sigma)$ is freely generated by $x^{p_{i_m}}$ for $m=1, \dots, N$, hence all coefficients are zero.
The order of the cyclic module of $v_{i_m}$ is either $p_{i_m}$ if defined, or infinite otherwise.

On the contrary, assume that $M$ is a direct sum of cyclic modules.
Consider the indices $m$ such that $v_{i_m}$ is an element of finite period $p_{i_m}$.
For each $m$, define the column which is zero for all indexes $j=1,\dots, N$ but in position $m$ where it is $x^{p_{i_m}}$.
By construction, $S$ is in graded Smith Normal Form with respect to generators of $M$. The direct sum defining $M$ implies that there are no other relations to be added to $S$ to obtain a presentation of $M$.

\end{proof}

\begin{corollary}
A Smith Normal Form presentation matrix for $\alpha(\mathcal{M})$ provides an interval basis for $\mathcal{M}$.
\label{prop:interval-basis-syzygies}
\end{corollary}
\begin{proof}
The result follows by recalling that an interval basis directly decomposes $\mathcal{M}$ into interval modules, which, by definition, are analogs of cyclic graded submodules.
Then, it suffices to apply~\Cref{thm:snf-interval-correspondence} to the associated module $\alpha(\mathcal{M})$.
\end{proof}

\subsection{Interval basis via Smith Normal Form}
\label{sec:gradedSmith}

% intro: combination of the methods
In this section, we propose a method to compute an interval basis based on a suitable reduction of a presentation matrix. It is based on a combination of two technical ingredients: first, the construction of a presentation matrix $S$ for $\alpha(\mathcal{M})$ out of a persistence module $\mathcal{M}=\{  (M_i, \varphi_i)\}_{i=0}^n$. 
 This is done in a way that, to our knowledge, was first explicitly envisaged in a technical passage of \cite{corbet2018}. 
 
Next, we proceed by reducing this presentation matrix into graded Smith Normal Form so that each 
 relation column admits a non-zero entry in correspondence of at most one generator. 
 To do that, we adapt the method from~\cite{skraba+2013x}.

\subsubsection{From a persistence module to its presentation matrix}
\label{sec:from-pers-module-to-matrix}
Our first task consists in defining a matrix $S$ such that  $\coker(S)$ is isomorphic to the graded module $\alpha(\mathcal{M})$ associated with the persistence module $\mathcal{M}$.

For each index $i=0, \dots, n$, fix a basis $\mathcal{B}_i = \{v^i_1,\dots, v^i_{m_i}\}$ of the step $M_i$ and let $\Phi_i$ be the $m_{i+1}\times m_i$ matrix expressing $\varphi_i$ with respect to bases $\mathcal{B}_i$ and $\mathcal{B}_{i+1}$.
Let $m$ be equal to $\sum_{i=0}^n m_i$.
Then, we want to define a presentation for $\alpha(\mathcal{M})$ of the kind

\begin{equation*}
	\xymatrix{
\bigoplus_{j=1}^m \F[x](-\deg s_j) \ar[r]^-{\sigma}	
&	\bigoplus_{i=0}^n \bigoplus_{h=1}^{m_i} \F[x](-h) \ar[r]^-{\epsilon}
&	 \alpha(\mathcal{M}) \ar[r]^-{}
&    0,
}
\label{eq:corbet-presentation}
\end{equation*}

\noindent 
where 
the map $\epsilon$ is defined by $e_{i,h} \mapsto v^i_h$,
and where we want to determine a square matrix $S$ with columns $(s_1,\dots, s_m)$ representing the map $\sigma$.

We follow the construction in Lemma 6 of \cite{corbet2018} and specialize it to modules with no free cyclic submodules. 
Begin by defining $S$ as a matrix of size $m \times m$. We define $S$ by defining some blocks within it. 
We will use the following notation: given a matrix $A$, by $A[:,j]$ we indicate the $\tth{j}$ column of the matrix, 
by $A[i,:]$ we indicate the $\tth{i}$ row of the matrix, whereas $A[{:},{:j}]$ denotes the submatrix given by the first $j$ columns of $A$. The same notation is used for the first arguments in parenthesis to denote operations on rows.
By $A[i:i',j:j']$, we indicate the submatrix given by the rows of  $A$ from $i$ to $i'$ and columns of $A$ from $j$ to $j'$.

Let $d_i := \sum_{j<i}m_j + 1$, for each index $i=0, \dots, n$ (i.e., $d_i$ is the index of the first generator of the $i^{th}$ step).
For each step $i=0,\dots,n$, matrix $S$ contains a $m_i \times m_i$ diagonal block, whose diagonal elements are $-x$:

\[
S[d_i : d_{i+1}-1 , d_i : d_{i+1}-1 ] = -x \ \Id{m_i \times m_i}.
\]

Also for each $i=0, \dots, n$, consider the block $S_i$ below the main diagonal with column indices $d_i, \dots, d_{i+1}-1$ and row indices $d_{i+1}, \dots, d_{i+2}-1$. 
Set 

\[
S[d_{i+1} : d_{i+2}-1 , d_i : d_{i+1}-1] = \Phi_i.
\]

Notice that $S$ is not a diagonal block matrix. This will impact the computational complexity of the reduction procedure.

\begin{definition} \label{def:presentation-matrix}
Given a persistence module $\mathcal{M}$,  the {\em persistence module presentation matrix} is the matrix $S$ obtained as above.
\end{definition}

\begin{theorem}
A persistence module $\mathcal{M}$ and its persistence module presentation matrix $S$ satisfy

\[
\alpha(\mathcal{M}) = \coker(S).
\]

\label{thm:alpha}
\end{theorem}

\begin{proof}
The proof follows from the proof of Lemma 6 in~\cite{corbet2018}.
\end{proof}

\begin{example} \label{ex:gSNF}
\begin{figure}
    \centering
    \includegraphics[scale=0.2,trim={2cm 5cm 1cm 8cm},clip]  {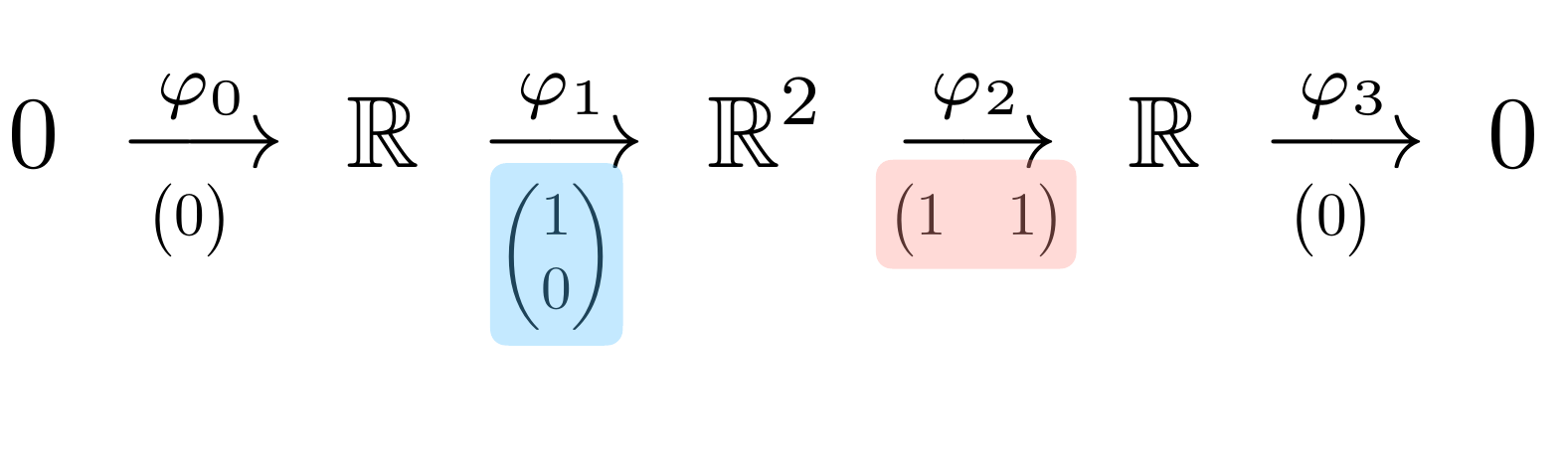}
    \caption{}
    \label{fig:exampleColor}
\end{figure}
\FloatBarrier
Consider the $\R$-persistence module in \cref{fig:exampleColor}, which coincides with the running example in \cref{ex:RunningExPersMod} for $\F = \R$. The matrices below each arrow represent the map above it in the bases $\mathcal{B}_i$'s. Notice that this also corresponds to the $1$-homology persistence module of \cref{fig:example_filtration} for real coefficients.
We ignore the zero steps as they are immaterial to the matrix construction. We say the module has three steps $M_1 = \R$ of degree $1$, $M_2 = \R^2$ of degree $2$ and $M_3 = \R$ of degree $3$. Matrix $S$ is $4 \times 4$, and it holds $d_1=1, \ d_2=2, \ d_3=4$. The matrix $S$ is as in the following \cref{fig:matrixColor}. The ochre blocks are the diagonal blocks, and the cyan and red blocks correspond to matrices $\varphi_1$ and $\varphi_2$ respectively (see colors in \cref{fig:exampleColor}).
\FloatBarrier
\begin{figure}[h]
    \centering
    \includegraphics[scale=0.2,trim={5cm 5cm 5cm 6.8cm},clip]{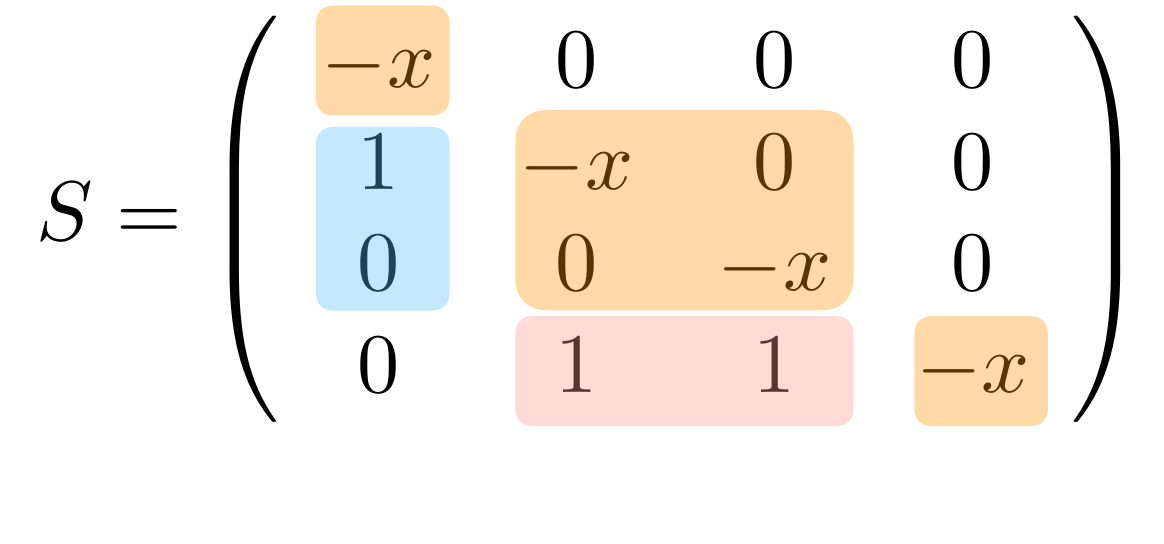}
    \caption{}
    \label{fig:matrixColor}
\end{figure}
\end{example}

Notice that the matrix $S$ represents a homogeneous homomorphism with respect to row and column grades. Hence, matrix $S$ can be considered with entries in $\F$. The only genuinely relevant information in the matrix is whether an element is zero or not because, other than that, its degree is determined by its position. 

\subsubsection{From a presentation matrix to its graded Smith Normal Form}
\label{sec:algo-graded-normal}
In general, the presentation obtained via Definition \ref{def:presentation-matrix} is far from being minimal because several pairs of generator-relation are in excess and can be discarded while maintaining a presentation of the same module. 
As seen in the previous section, we can obtain the interval generators if we find a suitable presentation. This amounts to obtaining a graded version of the structure theorem via the Smith Normal Form, and to the best of our knowledge, has only been explicitly done in \cite{skraba+2013x}.
\begin{theorem}{\cite{skraba+2013x}}
Let $M$ be a finitely-generated, graded $\F[x]$-module, and let $(\{ v_i \}, S)$ be a graded presentation of $M$. An algorithm exists to obtain another presentation of $M$, $(\{ v'_i \}, S')$ such that $S'$ is in graded Smith Normal Form. 
\end{theorem}

We apply the algorithm introduced in~\cite{skraba+2013x} in reducing the square matrix $S$ of size $m\times m$ with entries in $\F$.
The procedure returns invertible $m\times m$ matrices $R, C$ and $\snf(S)$ such that the matrix

\[
\snf(S) := RSC
\]

\noindent
is diagonal up to the reordering of rows and columns, and row and column degrees are preserved.

We sketch the algorithm as follows. By \textit{low} of a column, we refer to the index of its last (downward) non-zero entry. Notice no column of $S$ is zero at the beginning. Also, we disregard matrix $C$, as it is of no interest to us.

\begin{algorithm}
\SetAlgoLined
\textbf{Input}: Matrix $S$ as per Definition \ref{def:presentation-matrix} \;
\KwResult{Matrices $\snf(S)$ and the change of basis matrix $R$}
$R \leftarrow \Id{m\times m}$ \;
\For{$i = 1 , \dots, m$}{
    $l \leftarrow$ low of column $i$ of $S$\;
    $R[l,:]$ $\leftarrow$ $R[l,:] / R[l,i]$\;
    $S[l,:]$ $\leftarrow$ $S[l,:] / S[l,i]$\;
    % \CommentSty{\hfill // indices $j$ can be restricted  to non-trivial blocks as discussed in Section 6 \\}
    \For{$j= l-1, \dots, 1$}
    {$R[j,:]$ $\leftarrow$ $R[j,:] - S[j,i]R[l,:]$\;
    $S[j,:]$ $\leftarrow$ $S[j,:] - S[j,i]S[l,:]$\;
    }
    % \CommentSty{\hfill // indices $c$ can be restricted to non-trivial blocks as discussed in Section 6 \\}
    \For{$c=i+1, \dots, m$  }{
    $S[:,c]$ $\leftarrow$ $S[l,i]S[:,c] - S[l,c]S[:,i]$\;
    }
    $\snf(S) \leftarrow S$\;
}
\Return{ $\snf(S) , R $ }
 \caption{Graded Smith Normal Form \cite{skraba+2013x}}
 \label{alg:graded-SNF}
\end{algorithm}

Unlike the non-graded case, swapping rows and columns is not allowed among the typical elementary operations.
This explains the possibly non-diagonal final form in the graded counterpart $\snf(S)$ of the Smith Normal Form of $S$.
For each pivot in $\snf(S)$ corresponding to row $i$ and column $j$, we set $J(i) := \deg j - \deg i$.
If the row $i$ is null, we set $J(i) = \infty$. 

We close this section by linking the matrices $\snf(S)$ and $R$ to the interval basis of the persistence module $\mathcal{M}$ introduced at the beginning of this section.

\begin{theorem}
The columns in $R^{-1}$ of index $m$ such that $J(m)>0$ form an interval basis for $\mathcal{M}$.
\end{theorem}
\begin{proof}
By \cref{thm:alpha}, matrix $S$ defines a presentation of $\alpha(\mathcal{M})$ directly encoding $\mathcal{M}$.
As shown in~\cite{skraba+2013x}, $\snf(S)=RSC$ still defines a presentation of $\alpha(\mathcal{M})$.
Columns in $R^{-1}$ contain the new system of generators with respect to the generators in $S$.
Columns and rows in $\snf(S)$ contain at most one non-zero entry equal to some power of $x$. Hence, the matrix $\snf(S)$ is in graded Smith Normal Form according to Definition \ref{def:snf-matrix}. 
Observe that columns $v_m$ of index $m=1, \dots, N$ such that $J(m)>0$ correspond to non-invertible pivots since there are no null rows in $\snf(S)$.
Hence by Proposition \ref{prop:interval-basis-syzygies}, the set $\{ v_m\}_{m=1}^N$ forms an interval basis for $\alpha(M)$ and each element $v_m$ has associated interval of order $J(m)$.
\end{proof}

\begin{example}{(continued from \ref{ex:gSNF})} \label{eg:snf-reduction}
From matrix $S$, let us compute an interval basis. 
The Smith Normal Form reduction yields 

\begingroup % within this group we increase the column separation to print better
\setlength\arraycolsep{4pt}

\[ \snf(S) = \begin{pmatrix} 0 & 0 & 0 & x^3 \\ 
                        1 & 0 & 0 & 0 \\
                        0 & 0  & -x & 0 \\
                        0 & 1  & 0 & 0 \end{pmatrix} \]

We see rows $2$ and $4$ of $SNF(S)$ correspond to surplus generators, as they contain a unit in $\F[x]$. Row $1$ corresponds to a bar born at degree $1$ and killed by a relation (column) of degree $4$, yielding a pair $(1,4)$. Row $3$ corresponds to a bar born at degree $2$ and killed by a relation of degree $3$, yielding a pair $(2,3)$. 
The change of basis matrix $R$ is

\[ R = \begin{pmatrix} -1 & -x & -x & -x^2 \\ 
                        0 & 1 & 0 & 0 \\
                        0 & 0  & 1 & 0 \\
                        0 & 0  & 0 & 1 \end{pmatrix} \]

\noindent
whose inverse equals itself

\[ R^{-1} = \begin{pmatrix} -1 & -x & -x & -x^2 \\ 
                        0 & 1 & 0 & 0 \\
                        0 & 0  & 1 & 0 \\
                        0 & 0  & 0 & 1 \end{pmatrix} \]

\endgroup % within this group we had increased the column separation to print better
Then, columns $1$ and $3$ in this matrix, corresponding to non-zero persistence generators, form an interval basis. They are $-v^1_1$ and $-xv^1_1 + v^2_2$. They are indeed the first cycle to be born (up to a minus sign, which is immaterial), and the difference between the first cycle mapped at the second step and the second cycle. We remark that $xv^1_1 = v^2_1$. 
Notice that when implemented in practice, the terms of positive degree are substituted by their coefficient, as their degree is implicit by their position.

\end{example}

We have implemented this procedure as Python code, as a purely numerical matrix construction and reduction scheme, and plan to render it publicly available soon. 

\newpage
\section{Appendix}
\subsection*{Complementary Results to \Cref{sec:PersistentHomology}} 
\label{subsec:homology-algorithms}

In this section, we include complementary algorithms to implement the construction of a persistence module obtained through the $\tth{k}$-homology functor applied to a finitely generated chain complex or a chain map between finitely generated chain complexes.
In the following, we describe two methods that can possibly admit an implementation that is distributed over the persistence module steps.

In order to compute the persistent homology $\{ (H^i_k, \tilde{f}_i) \}$, for $i=1,\dots, n$ of a sequence of chain complex maps $\{ f_i \}$, we act in parallel over $i=1,\dots, n$.

\subsubsection*{Computing the homology steps in parallel }

A possible algorithm to retrieve the homology of a chain complex over any coefficient field.

\begin{algorithm}%[H]
\SetAlgoLined
\textbf{Input}: Boundary matrices $\partial_k, \partial_{k+1}$ of the chain complex $C$ \;
\KwResult{Betti number $\beta_k$ and basis $\{h_1,\dots, h_{\beta_k}, b_1, \dots, b_r\}$ of $Z_k$, where $\Span{[h_1],\dots, [h_{\beta_k}]} = H_k$ and $\Span{b_1,\dots, b_r}=B_k$. }
Compute the reduction $R_k = \partial_k V_k$ \;
Compute the reduction $R_{k+1} = \partial_{k+1} V_{k+1}$ \;
$b_1,\dots, b_r := $ non-zero columns of $R_{k+1}$ \;
$v_1,\dots,v_s := $ columns of $V_k$ corresponding to zero columns of $R_k$ \;
$J := $ matrix with columns $\{b_1,\dots,b_r, v_1,\dots, v_s\}$ \;
$\beta_k = 1$ \;
\For{$i= r+1, ... r+s$}{ 
    \While{$\exists j<i \ s.t. \ low(J[i]) = low(J[i])$}{
    $l := \text{low}(J[i])$\;
    $\gamma := J[l,i]/J[l,j]$\;
    $J[i] = J[i] - \gamma J[j]$\;
    }
    \If{J[i] is non-zero}{
   $h_{\beta_k}:= J[i]$\;
   $\beta_k = \beta_k + 1$\;
   }{}
}
\Return{ $\beta_k$, basis $\{h_1,\dots, h_{\beta_k}, b_1, \dots, b_r\}$}
 \caption{Computing homology}
 \label{alg:homology}
\end{algorithm}

\newpage
\subsubsection*{Computing the homology structure maps in parallel} 

An algorithm to retrieve the map induced between homology spaces, given a chain map.

\begin{algorithm}%[H]
\SetAlgoLined
\textbf{Input}: Chain map $f_k:C_k\to D_k$, representatives cycles $h_1^C, \dots, h_{\beta_k^C}^C$  of a basis of $H_k(C)$, $\beta_k^D$ and $\{h_1^D,\dots, h_{\beta_k^D}^D, b_1^D, \dots, b_r^D\}$ output of \cref{alg:homology} for $D$\;
\KwResult{map $\tilde{f}_k:H_k(C)\to H_k(D)$ induced by $f_k$.}
$\tilde{f}_k := $ zero matrix $\beta_k^D \times \beta_k^C$ \;
\For{$i = 1,\dots,\beta_k^C$}{
    Solve $f_k(h_i^C) = \sum_{j=1}^{\beta_k^D}\lambda_j h_j^D + \sum_{l=1}^r\mu_l b_l$ \;
    $\tilde{f}_k[i] = (\lambda_1,\dots, \lambda_{\beta_k^D})^T$
}
\Return{ $\tilde{f}_k$ }
 \caption{Induced map between homology spaces}
 \label{alg:general-induced-map}
\end{algorithm}

\begin{theorem}
The map $\tilde{f}_k$ defined in \cref{alg:general-induced-map} is well-defined, and it is the map induced by $f_k$ through the homology functor.
\label{thm:induce-hom-correctness}
\end{theorem}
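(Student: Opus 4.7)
The plan is to verify two claims: that the algorithm's output defines a genuine linear map $H_k(C) \to H_k(D)$, and that this map coincides with the one of Equation~(\ref{eq:induced-homology-map}). The structural ingredient is the chain-map identity, which yields both $f_{k-1}\partial^C_k = \partial^D_k f_k$ and $f_k \partial^C_{k+1} = \partial^D_{k+1} f_{k+1}$. These force $f_k(Z_k(C)) \subseteq Z_k(D)$ (if $z$ is a cycle, then $\partial^D_k f_k(z) = f_{k-1}\partial^C_k(z) = 0$) and $f_k(B_k(C)) \subseteq B_k(D)$ (a boundary $\partial^C_{k+1}(c)$ maps to $\partial^D_{k+1}(f_{k+1}(c))$). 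With these two inclusions in hand, well-definedness of any linear extension that agrees with $f_k$ on cycles modulo boundaries is automatic.

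Next I would unpack what the algorithm must be doing. Because \cref{alg:homology} produces a splitting basis with $Z_k(D) = \langle h^D_1,\dots,h^D_{\beta^D_k}\rangle \oplus \langle b^D_1,\dots,b^D_r\rangle$, and because each $f_k(h^C_j)$ already lies in $Z_k(D)$ by the cycle-preservation observation, the image decomposes uniquely as $f_k(h^C_j) = \sum_i a_{ij} h^D_i + \sum_\ell c_{\ell j} b^D_\ell$. The algorithm's job is to solve this linear system and return the $\beta^D_k \times \beta^C_k$ matrix $A = (a_{ij})$, declaring $\tilde f_k([h^C_j]_C) := \sum_i a_{ij}[h^D_i]_D$ and extending $\mathbb{F}$-linearly. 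Since $\{[h^C_j]_C\}_j$ is a basis of $H_k(C)$, this uniquely specifies a linear map, proving the first claim.

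To check agreement with the functorially induced map, I would compute $[f_k(h^C_j)]_D = \sum_i a_{ij}[h^D_i]_D + \sum_\ell c_{\ell j}[b^D_\ell]_D$; since each $b^D_\ell \in B_k(D)$, the classes $[b^D_\ell]_D = 0$, leaving $[f_k(h^C_j)]_D = \sum_i a_{ij}[h^D_i]_D = \tilde f_k([h^C_j]_C)$. Linearity then propagates the equality to all of $H_k(C)$, matching Equation~(\ref{eq:induced-homology-map}). The only real obstacle I anticipate is bookkeeping: confirming that the concrete linear-algebra routine used by the algorithm to recover the coefficients $a_{ij}$ --- presumably reduction of $f_k(h^C_j)$ against the concatenated basis $\{h^D_i\} \cup \{b^D_\ell\}$ --- truly returns the homology coefficients and not some twisted variant. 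This however reduces to the fact, guaranteed by the splitting diagram~(\ref{eq:costruction-hom-split-c}) built by \cref{alg:homology}, that $\{h^D_1,\dots,h^D_{\beta^D_k},b^D_1,\dots,b^D_r\}$ is a genuine ordered basis of $Z_k(D)$, so the decomposition is unique and uniquely computable.
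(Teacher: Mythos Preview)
Your proof is correct and follows essentially the same approach as the paper. The paper phrases the argument through the splitting formalism of the Appendix, writing $\tilde{f}_k = \mbox{proj}_1\, h^{-1} f_k\, h\, \mbox{incl}_1$ and invoking Remark~\ref{rem:induced-splitted}, but the content is identical to yours: decompose each $f_k(h^C_i)$ uniquely in the splitting basis $\{h^D_j\}\cup\{b^D_\ell\}$ of $Z_k(D)$ and read off the $h^D$-coefficients, which by construction are the coordinates of $[f_k(h^C_i)]_D$ in the basis $\{[h^D_j]_D\}$.
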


\begin{proof}
For all $i = 1, \dots , \beta_k^C$, it holds $
   \tilde{f}_k([h_i^C]_C)=\left[ f_k(h_i^C)\right]_D = \sum_{j=1}^{\beta_k^D}\lambda_j [h_j^D]_D$.
\end{proof}

\end{document}